\newtheorem{theorem}{Theorem}[section]
\newtheorem{prop}[theorem]{Proposition}
\newtheorem{lemma}[theorem]{Lemma}
\newtheorem{coro}[theorem]{Corollary}
\newtheorem{prop-def}{Proposition-Definition}[section]
\newtheorem{conjecture}[theorem]{Conjecture}
\theoremstyle{definition}
\newtheorem{defn}[theorem]{Definition}
\newtheorem{remark}[theorem]{Remark}
\newtheorem{exam}[theorem]{Example}
\newcommand{\nc}{\newcommand}
\newcommand {\emptycomment}[1]{}
\nc{\delete}[1]{{}}
\nc{\mmargin}[1]{}
\nc{\mlabel}[1]{\label{#1}}  
\nc{\mcite}[1]{\cite{#1}}  
\nc{\mref}[1]{\ref{#1}}  
\nc{\meqref}[1]{\eqref{#1}}  
\nc{\mbibitem}[1]{\bibitem{#1}} 
	\nc{\mlabel}[1]{\label{#1}  
		{\hfill \hspace{1cm}{\bf{{\ }\hfill(#1)}}}}
	\nc{\mcite}[1]{\cite{#1}{{\bf{{\ }(#1)}}}}  
	\nc{\mref}[1]{\ref{#1}{{\bf{{\ }(#1)}}}}  
	\nc{\meqref}[1]{\eqref{#1}{{\bf{{\ }(#1)}}}}  
	\nc{\mbibitem}[1]{\bibitem[\bf #1]{#1}} 
\nc{\vep}{\varepsilon}
\nc{\bin}[2]{ (_{\stackrel{\scs{#1}}{\scs{#2}}})}  
\nc{\binc}[2]{(\!\! \begin{array}{c} \scs{#1}\\
		\scs{#2} \end{array}\!\!)}  
\nc{\bincc}[2]{  ( {\scs{#1} \atop
		\vspace{-1cm}\scs{#2}} )}  
\nc{\oline}[1]{\overline{#1}}
\nc{\mapm}[1]{\lfloor\!|{#1}|\!\rfloor}
\nc{\bs}{\bar{S}}
\nc{\cast}{{\,\mbox{\raisebox{.8pt}{$\scriptstyle \circledast$}}\,}}
\nc{\la}{\longrightarrow}
\nc{\ot}{\otimes}
\nc{\rar}{\rightarrow}
\nc{\dar}{\downarrow}
\nc{\dap}[1]{\downarrow \rlap{$\scriptstyle{#1}$}}
\nc{\defeq}{\stackrel{\rm def}{=}}
\nc{\dis}[1]{\displaystyle{#1}}
\nc{\dotcup}{\ \displaystyle{\bigcup^\bullet}\ }
\nc{\hcm}{\ \hat{,}\ }
\nc{\hts}{\hat{\otimes}}
\nc{\hcirc}{\hat{\circ}}
\nc{\lleft}{[}
\nc{\lright}{]}
\nc{\curlyl}{\left \{ \begin{array}{c} {} \\ {} \end{array}
	\right .  \!\!\!\!\!\!\!}
\nc{\curlyr}{ \!\!\!\!\!\!\!
	\left . \begin{array}{c} {} \\ {} \end{array}
	\right \} }
\nc{\longmid}{\left | \begin{array}{c} {} \\ {} \end{array}
	\right . \!\!\!\!\!\!\!}
\nc{\ora}[1]{\stackrel{#1}{\rar}}
\nc{\ola}[1]{\stackrel{#1}{\la}}
\nc{\scs}[1]{\scriptstyle{#1}} \nc{\mrm}[1]{{\rm #1}}
\nc{\dirlim}{\displaystyle{\lim_{\longrightarrow}}\,}
\nc{\invlim}{\displaystyle{\lim_{\longleftarrow}}\,}
\nc{\dislim}[1]{\displaystyle{\lim_{#1}}} \nc{\colim}{\mrm{colim}}
\nc{\mvp}{\vspace{0.3cm}} \nc{\tk}{^{(k)}} \nc{\tp}{^\prime}
\nc{\ttp}{^{\prime\prime}} \nc{\svp}{\vspace{2cm}}
\nc{\vp}{\vspace{8cm}}
\nc{\modg}[1]{\!<\!\!{#1}\!\!>}
\nc{\intg}[1]{F_C(#1)}
\nc{\lmodg}{\!<\!\!}
\nc{\rmodg}{\!\!>\!}
\nc{\cpi}{\widehat{\Pi}}
\nc{\ssha}{{\mbox{\cyrs X}}} 
\nc{\tsha}{{\mbox{\cyrt X}}}
\nc{\shpr}{\diamond}    
\nc{\labs}{\mid\!}
\nc{\rabs}{\!\mid}
\nc{\btr}{\blacktriangleright}
\nc{\ad}{\mrm{ad}}
\nc{\rRB}{\mathsf{rRB}}
\nc{\cocrRB}{\mathsf{cocrRB}}
\nc{\PH}{\mathsf{PH}}
\nc{\cocPH}{\mathsf{cocPH}}
\nc{\ann}{\mrm{ann}}
\nc{\Aut}{\mrm{Aut}}
\nc{\Der}{\mrm{Der}}
\nc{\Sym}{\mrm{Sym}}
\nc{\br}{\mrm{bre}}
\nc{\can}{\mrm{can}}
\nc{\Cont}{\mrm{Cont}}
\nc{\rchar}{\mrm{char}}
\nc{\cok}{\mrm{coker}}
\nc{\de}{\mrm{dep}}
\nc{\dtf}{{R-{\rm tf}}}
\nc{\dtor}{{R-{\rm tor}}}
\nc{\Dif}{\mrm{Diff}}
\nc{\Div}{\mrm{Div}}
\nc{\End}{\mrm{End}}
\nc{\Ext}{\mrm{Ext}}
\nc{\Fil}{\mrm{Fil}}
\nc{\Fr}{\mrm{Fr}}
\nc{\Frob}{\mrm{Frob}}
\nc{\Gal}{\mrm{Gal}}
\nc{\GL}{\mrm{GL}}
\nc{\Gr}{\mrm{Gr}}
\nc{\Hom}{\mrm{Hom}}
\nc{\Hoch}{\mrm{Hoch}}
\nc{\hsr}{\mrm{H}}
\nc{\hpol}{\mrm{HP}}
\nc{\id}{\mrm{id}}
\nc{\im}{\mrm{im}}
\nc{\inv}{\mrm{inv}}
\nc{\Id}{\mrm{Id}}
\nc{\ID}{\mrm{ID}}
\nc{\Irr}{\mrm{Irr}}
\nc{\incl}{\mrm{incl}}
\nc{\length}{\mrm{length}}
\nc{\NLSW}{\mrm{NLSW}}
\nc{\Lie}{\mrm{Lie}}
\nc{\mchar}{\rm char}
\nc{\mpart}{\mrm{part}}
\nc{\ql}{{\QQ_\ell}}
\nc{\qp}{{\QQ_p}}
\nc{\rank}{\mrm{rank}}
\nc{\rcot}{\mrm{cot}}
\nc{\rdef}{\mrm{def}}
\nc{\rdiv}{{\rm div}}
\nc{\rtf}{{\rm tf}}
\nc{\rtor}{{\rm tor}}
\nc{\res}{\mrm{res}}
\nc{\SL}{\mrm{SL}}
\nc{\Spec}{\mrm{Spec}}
\nc{\tor}{\mrm{tor}}
\nc{\Tr}{\mrm{Tr}}
\nc{\tr}{\mrm{tr}}
\nc{\wt}{\mrm{wt}}
\nc{\bfk}{{\bf k}}
\nc{\bfone}{{\bf 1}}
\nc{\bfzero}{{\bf 0}}
\nc{\detail}{\marginpar{\bf More detail}
	\noindent{\bf Need more detail!}
	\svp}
\nc{\gap}{\marginpar{\bf Incomplete}\noindent{\bf Incomplete!!}
	\svp}
\nc{\FMod}{\mathbf{FMod}}
\nc{\Int}{\mathbf{Int}}
\nc{\Mon}{\mathbf{Mon}}
\nc{\remarks}{\noindent{\bf Remarks: }}
\nc{\Rep}{\mathbf{Rep}}
\nc{\Rings}{\mathbf{Rings}}
\nc{\Sets}{\mathbf{Sets}}
\nc{\Diff}{\mathbf{Diff}}
\nc{\Inte}{\mathbf{Inte}}
\nc{\U}{\mathbf{U}}
\nc{\BA}{{\mathbb A}}   \nc{\CC}{{\mathbb C}}
\nc{\DD}{{\mathbb D}}   \nc{\EE}{{\mathbb E}}
\nc{\FF}{{\mathbb F}}   \nc{\GG}{{\mathbb G}}
\nc{\HH}{{\mathbb H}}   \nc{\LL}{{\mathbb L}}
\nc{\NN}{{\mathbb N}}   \nc{\PP}{{\mathbb P}}
\nc{\QQ}{{\mathbb Q}}   \nc{\RR}{{\mathbb R}}
\nc{\TT}{{\mathbb T}}   \nc{\VV}{{\mathbb V}}
\nc{\ZZ}{{\mathbb Z}}   \nc{\TP}{\widetilde{P}}
\nc{\cala}{{\mathcal A}}    \nc{\calc}{{\mathcal C}}
\nc{\cald}{\mathcal{D}}     \nc{\cale}{{\mathcal E}}
\nc{\calf}{{\mathcal F}}    \nc{\calg}{{\mathcal G}}
\nc{\calh}{{\mathcal H}}    \nc{\cali}{{\mathcal I}}
\nc{\call}{{\mathcal L}}    \nc{\calm}{{\mathcal M}}
\nc{\caln}{{\mathcal N}}    \nc{\calo}{{\mathcal O}}
\nc{\calp}{{\mathcal P}}    \nc{\calr}{{\mathcal R}}
\nc{\cals}{{\mathcal S}}    \nc{\calt}{{\Omega}}
\nc{\calv}{{\mathcal V}}    \nc{\calu}{{\mathcal U}}
\nc{\calw}{{\mathcal W}}
\nc{\calx}{{\mathcal X}}
\nc{\fraka}{{\mathfrak a}}
\nc{\frakb}{\mathfrak{b}}
\nc{\frakg}{{\frak g}}
\nc{\frakh}{{\frak h}}
\nc{\frakl}{{\frak l}}
\nc{\fraks}{{\frak s}}
\nc{\frakB}{{\frak B}}
\nc{\frakm}{{\frak m}}
\nc{\frakM}{{\frak M}}
\nc{\frakp}{{\frak p}}
\nc{\frakW}{{\frak W}}
\nc{\frakX}{{\frak X}}
\nc{\frakS}{{\frak S}}
\nc{\frakA}{{\frak A}}
\nc{\frakx}{{\frakx}}
\nc{\ynr}[1]{\textcolor{orange}{\underline{Yunnan:}#1 }}
\nc{\lir}[1]{\textcolor{red}{\underline{Li:}#1 }}
	\numberwithin{equation}{section}
\begin{document}

\title[Towards the classification of finite-dimensional DGCAs]{Towards the classification of finite-dimensional diagonally graded commutative 
algebras}

\author{Yunnan Li}
\address{School of Mathematics and Information Science, Guangzhou University, Guangzhou 510006, China}
\email{ynli@gzhu.edu.cn}

\author{Shi Yu}
\address{School of Mathematics and Information Science, Guangzhou University, Guangzhou 510006, China}
\email{2112115054@e.gzhu.edu.cn}



\begin{abstract}
Any finite-dimensional commutative (associative) graded algebra with all nonzero homogeneous subspaces one-dimensional is defined by a symmetric coefficient matrix. This algebraic structure gives a basic kind of $A$-graded algebras originally studied by Arnold. In this paper, we call them diagonally graded commutative algebras (DGCAs) and verify that the isomorphism classes of DGCAs of dimension $\leq 7$ over an arbitrary field are in bijection with the equivalence classes consisting of coefficient matrices with the same distribution of nonzero entries, while dramatically there may be infinitely many isomorphism classes of dimension $n$ corresponding to one equivalence class of coefficient matrices when $n\geq 8$.

Furthermore, we adopt the Skjelbred-Sund method of central extensions to study the isomorphism classes of DGCAs, and associate any DGCA with a undirected simple graph to explicitly describe its corresponding second (graded) commutative cohomology group as an affine variety.
\end{abstract}

\keywords{commutative algebra, diagonally graded algebra, isomorphism class, central extension\\
\qquad 2020 Mathematics Subject Classification. 13A02, 13E10, 14L30}

\maketitle

\tableofcontents

\allowdisplaybreaks

\section{Introduction}
\smallskip
Associative algebra, as a fundamental algebraic structure, has been studied extensively for many years, with the algebraic classification of associative algebras as an ancient and longstanding topic worthy of discussion. However, since this algebraic system is huge and extraordinarily complicated, the classification of associative algebras up to isomorphism is far from being completed.

At present, there have been many achievements about the classification of associative algebras of finite dimension.
Especially, nilpotent associative algebras form a large class of associative algebras, which has fruitful classification results. For example, de Graaf completed the classification of 4-dimensional nilpotent associative algebras over arbitrary fields in \cite{De} by constructing central extensions of smaller dimensional nilpotent associative algebras. Recently, Kaygorodov et al. discussed the classification of 5-dimensional nilpotent commutative associative algebras over the complex field also based on the Skjelbred-Sund method of central extensions \cite{KRS}. In addition, Kaygorodov et al. specifically gave a geometric classification of $n$-dimensional nilpotent, commutative nilpotent and anticommutative nilpotent algebras in \cite{KKl}.

On the other hand, the study of graded algebras is an active area, applied in various fields of mathematics and physics. The gradings on classes of algebras provide a powerful tool for understanding and analyzing their algebraic structures. For example, the concept of grading was widely used in the study of Lie algebras (e.g.~\cite{Kac,Ma,GJ}), and also other related algebraic structures, such as associative algebras~\cite{BSZ}, pre-Lie algebras~\cite{Ch,KCB} and Jordan algebras~\cite{BSz}.
In \cite{Ar} Arnold introduced the notion of {\it $A$-graded algebra}, and mainly studied the classification of commutative $A$-graded algebras with 3 multiplicative generators and all nontrivial homogeneous subspaces of dimension 1. Also, Arnold initiated the study of $A$-graded ideals in \cite{Ar}, and found that under certain circumstances, this ideal structure can be encoded as a continued fraction. Subsequently, the authors in \cite{KOR} and \cite{KPR} supplemented classification method of $A$-graded algebras with 3 generators in detail. General study of $A$-graded algebras and $A$-graded ideals can be found in e.g.~\cite{St0,St,PS}.

In this paper, we study a class of finite-dimensional commutative associative $\mathbb N^+$-graded algebras (consequently nilpotent), with all nonzero homogeneous subspaces being $1$-dimensional, and the corresponding graded multiplication can be characterized by a symmetric coefficient matrix.
This algebraic structure is actually one basic kind of $A$-graded algebras with the first generator of degree 1, and we specially call them {\it diagonally graded commutative algebras} (DGCAs). Based on such a characterization by coefficient matrices, we try to attack the classification problem of finite-dimensional DGCAs.

It is noted that if commutative associative algebras are endowed with graded structure, then the algebras originally classified in one isomorphism class are often further divided into several graded isomorphism classes. In addition, the classification of one algebraic structure is closely related to which field it is defined over. For example, the classification of real simple Lie algebras can be obtained by classifying involutions of compact Lie algebras based on the classification of complex simple Lie algebras. By contrast, the classification of DGCAs of low dimension has a prominent feature, that is, the isomorphism classes over any prime field are already indivisible, and do not subdivide over arbitrary field extensions.

Based on our initial classification results in low dimension,
we expect that the classification of DGCAs is simply determined by the distribution of nonzero entries in their coefficient matrices, namely the existence of a one-to-one correspondence between the isomorphism classes of finite-dimensional DGCAs and certain equivalence classes of coefficient matrices.
However, through the study of central extensions of DGCAs, we find that such an expectation collapses once the dimension $n>7$.
As a matter of fact, there appear infinitely many isomorphism classes with the same distribution of nonzero entries in their coefficient matrices.
One can compare such a variation with the classification result of Poonen
in \cite{Po} saying that there are only finitely many isomorphism classes of commutative associative unital algebras of dimension up to 6 over an algebraically closed field, but infinitely many of dimension $>6$.

Concretely, we interpret any $n$-dimensional DGCA $\frakp$ as a $1$-dimensional central extension of a DGCA $\frakp'$ of dimension $n-1$ (Proposition~\ref{prop:DGCA_ext}), and describe the corresponding second (graded) commutative cohomology group of $\frakp'$ explicitly as an affine variety. Any of its orbit closures is indeed a toric variety, under the action of the automorphism group $\Aut(\frakp')$ of $\frakp'$.

This paper is organized by the upcoming three sections. In Section~\ref{sec:preliminaries}, we introduce the notion of diagonally graded commutative algebras and their isomorphism classes, then state our classification result on DGCAs (Theorem~\ref{th:classification}).
In Section~\ref{sec:central_extension}, we adopt the Skjelbred-Sund method to study  central extensions of DGCAs, and show that the nontrivial isomorphism classes of central extensions of a DGCA $\frakp'$ are in bijection with the $\Aut(\frakp')$-orbits in the Grassmannian of $1$-dimensional subspaces of $2$-cocycles (Theorem~\ref{th:isom}). After that, we construct a kind of graphs associated with coefficient matrices to explicitly characterize the second (graded) commutative cohomology groups (Theorem~\ref{th:cocyle_graph}), which control the isomorphism classes of central extensions of DGCAs.
In particular, we obtain a criterion when all nontrivial central extensions of a DGCA form exactly one isomorphism class (Theorem~\ref{th:orbit}). In Section~\ref{sec:low_dim},
we completely list all isomorphism classes of DGCAs of dimension up to 5.
Also, some isomorphism classes of DGCAs of dimension 6 and 7 are presented in the same manner as an illustration.

\medskip\noindent
{\bf Notations.}
Let $\mathbb N$ (resp. $\mathbb N^+$) be the set of nonnegative (resp. positive) integers.
All the matrices, vector spaces, algebras and linear maps are taken over an algebraically closed field $\mathbbm k$ unless otherwise specified. Also, we denote $\mathbbm k^*=\mathbbm k\backslash\{0\}$ and $[n]=\{1,\dots,n\}$ for any positive integer $n$.

\section{Diagonally graded commutative algebras and their isomorphism classes}\label{sec:preliminaries}
\begin{defn}
An {\bf associative algebra} $A$ is a vector space with a bilinear multiplication $(x,y)\mapsto xy$ such that
\begin{eqnarray}\label{eq:ass_alg}
x(yz)=(xy)z,\quad\forall x,y,z\in A.
\end{eqnarray}
When $xy=yx,\forall x,y\in A$, we say that the associative algebra $A$ is a {\bf commutative (associative) algebra}.

If the (commutative) algebra $A$ is an $\mathbb N$-graded vector space $A=\oplus_{n\geq0} A_n$ such that $A_nA_m\subset A_{n+m}$ for any $n,m\geq0$, then we call $A$ a {\bf (commutative) graded algebra}.

An {\bf isomorphism} between two (graded) algebras $A$ and $A'$ is a (graded) linear isomorphism $\varphi: A\to A'$ such that
\begin{eqnarray}\label{eq:alg_iso}
\varphi(xy)=\varphi(x)\varphi(y),\quad\forall x,y\in A.
\end{eqnarray}
\end{defn}
Clearly, any finite-dimensional $\mathbb N^+$-graded associative algebra $A=\oplus_{n\geq1} A_n$ is {\it nilpotent} by its graded multiplication.

From now on, we focus on an $n$-dimensional $\mathbb N^+$-graded vector space $\mathfrak p$ with a fixed homogeneous linear basis $\{p_{i}\}_{i\in [n]}$ such that $\deg p_i=i$, so call $\mathfrak p$ a {\bf diagonally graded} vector space with all its nonzero homogeneous subspaces one-dimensional. If $\mathfrak p$ has a graded bilinear multiplication $(x,y)\mapsto xy,\forall x,y\in{\mathfrak p}$, then there exists an $n\times n$ coefficient matrix $C=(c_{ij})_{i,j\in [n]}$ such that
\begin{eqnarray}\label{eq:GAA}
p_{i}p_{j}=c_{ij}p_{i+j},
\end{eqnarray}
where we take $c_{ij}=0$ and set $p_{i+j}=0$ if $i+j>n$ by convention. From the relation \eqref{eq:ass_alg}, one can see that $\mathfrak p$ is a graded associative algebra, if and only if
\begin{eqnarray}\label{eq:ass_coef}
c_{jk}c_{i,\,j+k}=c_{ij}c_{i+j,\,k},\quad\forall\,i,j,k\geq1.
\end{eqnarray}
In particular, $\frakp$ is commutative if and only if the coefficient matrix $C$ is a symmetric matrix.

In order to classify the graded associative algebraic structures on $\mathfrak p$, we need to find all their isomorphism classes.

\begin{theorem}\label{th:GAA_iso}
Two $n$-dimensional graded associative algebras underlying $\frakp$ with coefficient matrices $C$ and $C'$ respectively
are isomorphic under the graded algebra isomorphism $\varphi$, if and only if there exists a tuple $(b_{i})_{i\in [n]}$ of nonzero coefficients such that
\begin{eqnarray}\label{eq:GAA_iso}
b_{i+j}c_{ij}=b_{i}b_{j}c'_{ij},\quad\forall i,j\geq1,
\end{eqnarray}
where $C=(c_{ij})_{i,j\in [n]}$ and $C'=(c'_{ij})_{i,j\in [n]}$. Here set $b_{i+j}=0$, when $i+j>n$ by convention.
\end{theorem}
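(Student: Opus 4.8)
The plan is to reduce everything to how a graded linear isomorphism acts on the fixed homogeneous basis. First I would note that, since $\frakp$ is diagonally graded, each graded component $\frakp_i=\mathbbm k\,p_i$ is one-dimensional; hence any graded linear isomorphism $\varphi\colon\frakp\to\frakp$ restricts to a linear isomorphism of each $\frakp_i$, and there is a unique tuple $(b_i)_{i\in[n]}$ with $b_i\in\mathbbm k^*$ such that $\varphi(p_i)=b_ip_i$ for all $i\in[n]$. Conversely, any tuple of nonzero scalars $(b_i)_{i\in[n]}$ determines, by the same formula extended linearly, a graded linear isomorphism of $\frakp$. This sets up the bijection between graded linear self-isomorphisms of $\frakp$ and tuples in $(\mathbbm k^*)^n$, and the task becomes to see which tuples intertwine the two multiplications.

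Next I would translate the algebra-homomorphism condition \eqref{eq:alg_iso} into a condition on the $b_i$. Since both multiplications are bilinear and $\varphi$ is linear, \eqref{eq:alg_iso} holds for all $x,y\in\frakp$ if and only if it holds for all pairs of basis vectors $x=p_i,\ y=p_j$ with $i,j\geq1$. Evaluating both sides on $p_i,p_j$ via \eqref{eq:GAA}, the left-hand side of \eqref{eq:alg_iso} equals $\varphi(c_{ij}p_{i+j})=b_{i+j}c_{ij}\,p_{i+j}$, while the right-hand side equals $b_ib_j\,c'_{ij}\,p_{i+j}$, the product $\varphi(p_i)\varphi(p_j)$ being taken in the algebra with matrix $C'$. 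Comparing the coefficients of $p_{i+j}$ yields precisely $b_{i+j}c_{ij}=b_ib_j c'_{ij}$. When $i+j>n$ one invokes the stated conventions $p_{i+j}=0$, $b_{i+j}=0$ and $c_{ij}=c'_{ij}=0$, so the identity is vacuously true there; thus it is equivalent to record it as \eqref{eq:GAA_iso} for all $i,j\geq1$.

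Finally I would observe that no separate compatibility with associativity needs checking: a linear isomorphism that intertwines the two products on basis elements intertwines them everywhere by bilinearity, and associativity of each product is already ensured by \eqref{eq:ass_coef}. Hence the equivalence follows in both directions — from $\varphi$ one extracts $(b_i)$ satisfying \eqref{eq:GAA_iso}, and from $(b_i)$ satisfying \eqref{eq:GAA_iso} the map $p_i\mapsto b_ip_i$ extends to a graded algebra isomorphism. The only point demanding any care is the bookkeeping of the degree-shift conventions for $i+j>n$, which I would dispatch once at the outset and then suppress; there is no genuine obstacle here, the statement being essentially a direct unwinding of the definitions.
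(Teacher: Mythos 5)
Your proposal is correct and follows essentially the same route as the paper: write $\varphi(p_i)=b_ip_i$ with $b_i\in\mathbbm k^*$ using the one-dimensionality of each graded component, evaluate the homomorphism condition \eqref{eq:alg_iso} on basis pairs via \eqref{eq:GAA}, and compare coefficients to obtain \eqref{eq:GAA_iso}. You merely spell out the converse direction and the $i+j>n$ conventions more explicitly than the paper does.
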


\begin{proof}
As $\varphi$ is a graded linear automorphism of $\mathfrak p$, there are nonzero coefficients $b_i\in\mathbbm k,\ i\in [n]$, such that $\varphi(p_{i})=b_{i}p_{i}$, where $\{p_{i}\}_{i\in [n]}$ is the fixed homogeneous linear basis of $\mathfrak p$. Substituting it and Eq.~\eqref{eq:GAA} into Eq.~\eqref{eq:alg_iso}, one can easily check that the relation \eqref{eq:GAA_iso} holds.
\end{proof}

\begin{defn}\label{defn:dgca}
The above commutative graded algebra $\frakp$ associated with a coefficient matrix $C$ is called a {\bf diagonally graded commutative algebra} (with the abbreviation ``DGCA'') and denoted by $\frakp(C)$.

For any $n$-dimensional DGCA  $\frakp(C)$, there is an associated $(n-1)$-dimensional
DGCA $\bar\frakp(\bar C)$ as its {\bf truncation}, where $\bar\frakp$ is the graded subspace of $\frakp$ spanned by $\{p_{i}\}_{i\in [n-1]}$ and $\bar C=(\bar c_{ij})_{i,j\in [n-1]}$ is the $(n-1)\times(n-1)$ symmetric matrix defined by
$$\bar c_{ij}=\begin{cases}
c_{ij},&i+j<n,\\
0,&{\rm otherwise}.
\end{cases}$$
Correspondingly, we call $\bar C$ the {\bf truncation} of $C$.
\end{defn}

The notion of diagonally graded algebra was originally introduced by Arnold in \cite{Ar}, where he called them $A$-graded algebras and actually classified all unital DGCAs with three multiplicative generators (and $p_0=1$). Using our notation of DGCAs, we can obtain the following result concerning about multiplicative generators of a DGCA.
\begin{prop}
For any $n$-dimensional DGCA  $\frakp(C)$, let
$$J_C=\{k\in[n]\,|\,c_{ij}=0\ \mbox{if}\ i+j=k\}.$$
Then $\{p_k\}_{k\in J_C}$ is a minimal set of multiplicative generators of $\frakp(C)$.
\end{prop}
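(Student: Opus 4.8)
The plan is to show two things: first, that $\{p_k\}_{k\in J_C}$ generates $\frakp(C)$ as an algebra, and second, that no proper subset of $\{p_k\}_{k\in J_C}$ generates it, so that the set is minimal. Throughout I will use the grading: the subalgebra $\langle p_k : k\in S\rangle$ generated by a set $S\subseteq J_C$ is a graded subspace of $\frakp(C)$, so to understand it degree by degree it suffices to track which homogeneous basis vectors $p_m$ lie in it.

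For the generation claim, I would argue by strong induction on $m\in[n]$ that $p_m$ lies in the subalgebra generated by $\{p_k\}_{k\in J_C}$. If $m\in J_C$ this is immediate. If $m\notin J_C$, then by definition of $J_C$ there exist $i,j\geq 1$ with $i+j=m$ and $c_{ij}\neq 0$; since $i,j<m$, the induction hypothesis gives $p_i,p_j$ in the subalgebra, and then $p_m = c_{ij}^{-1} p_i p_j$ (using $c_{ij}\in\mathbbm k^*$) lies in it as well. This shows $\{p_k\}_{k\in J_C}$ generates $\frakp(C)$.

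For minimality, fix $k_0\in J_C$ and set $S=J_C\setminus\{k_0\}$; I must show $p_{k_0}$ is \emph{not} in the subalgebra $\frakp_S$ generated by $S$. The key observation is that $\frakp_S$ is spanned by products $p_{i_1}\cdots p_{i_r}$ with all $i_\ell\in S$ and $r\geq 1$; such a product is a scalar multiple of $p_{i_1+\cdots+i_r}$, and the scalar is a product of coefficients $c_{\cdot,\cdot}$. For $r=1$ this contributes $p_i$, $i\in S$, none equal to $p_{k_0}$. For $r\geq 2$, write $m=i_1+\cdots+i_r$ with each $i_\ell\in S\subseteq[n]$, $i_\ell\geq 1$; I claim such an $m$ can only equal $k_0$ if the corresponding product vanishes. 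Indeed, a nonzero product $p_{i_1}\cdots p_{i_r}$ (associatively bracketed) forces, at each stage of multiplication, a coefficient $c_{a,b}\neq 0$ with $a+b\leq m$; in particular the final multiplication gives $c_{a,\,m-a}\neq 0$ for some $1\leq a<m$. But $m=k_0\in J_C$ means $c_{a,\,k_0-a}=0$ for \emph{all} $a$, a contradiction. Hence every length-$\geq 2$ product landing in degree $k_0$ is zero, so the degree-$k_0$ component of $\frakp_S$ is spanned by $\{p_i : i\in S,\ i=k_0\}=\varnothing$, i.e.\ is zero, and $p_{k_0}\notin\frakp_S$. Therefore $S$ does not generate $\frakp(C)$, and since $k_0\in J_C$ was arbitrary, $\{p_k\}_{k\in J_C}$ is a minimal generating set.

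The one point requiring a little care — and the main obstacle — is the minimality argument: one must be sure that \emph{every} way of forming a degree-$k_0$ element out of generators of smaller or other degrees produces zero, not just the ``obvious'' length-two products. The clean way to handle this is exactly the remark above that any nonzero iterated product of homogeneous elements lands in some degree $m$ via a final multiplication $p_a\cdot p_{m-a}$ with $c_{a,\,m-a}\neq 0$, which is incompatible with $m\in J_C$; this reduces the whole issue to the defining property of $J_C$. A small subtlety is that $\frakp_S$ as a subalgebra consists of sums of such products (not a single product), but since the grading is respected, the degree-$k_0$ part of a sum is a sum of degree-$k_0$ parts of the summands, each of which we have just shown to vanish; so no cancellation phenomenon can create $p_{k_0}$.
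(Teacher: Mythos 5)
Your proof is correct and follows essentially the same route as the paper, which simply observes (via Eq.~(3)) that $p_k$ is expressible as a polynomial in the $p_i$ with $i<k$ if and only if $k\notin J_C$; your induction for generation and your graded degree-$k_0$ vanishing argument for minimality are just a careful spelling-out of that one-line observation.
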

\begin{proof}
Indeed, we must have $1\in J_C$, so $J_C$ is nonempty. According to Eq.~\eqref{eq:GAA}, for any $k\in [n]$, $p_k$ can be expressed as a polynomial of $p_i$'s with smaller $i$, if and only if $k\notin J_C$. Therefore, $\{p_k\}_{k\in J_C}$ is a minimal set of multiplicative generators of $\frakp(C)$.
\end{proof}

\begin{defn}
Let $\mathcal M_n$ be the set of $n\times n$ symmetric matrices $C=(c_{ij})_{i,j\in [n]}$ over $\mathbbm k$ satisfying Eq.~\eqref{eq:ass_coef} and $c_{ij}=0$ if $i+j>n$.

Introduce the equivalence relation $\sim$ on $\mathcal M_n$ such that $C\sim C'$ if and only if the distributions of nonzero entries in $C$ and $C'$ coincide.

Especially, each equivalence class in $\mathcal M_n$ with respect to $\sim$ has a unique $(0,1)$-matrix as its canonical representative.
\end{defn}

At first we provide the following concise classification of DGCAs of low dimension (see Section~\ref{sec:low_dim}).
\begin{theorem}\label{th:classification}
When $n\leq 7$, two $n$-dimensional DGCAs $\frakp(C)$ and $\frakp(C')$ are isomorphic if and only if $C\sim C'$. Namely, the set of isomorphism classes of $n$-dimensional DGCAs is in bijection with the quotient set $\mathcal M_n/\sim$ of $\mathcal M_n$ modulo $\sim$.
In particular, all $(0,1)$-matrices in $\mathcal M_n$  as the coefficient matrices define a canonical representative system of isomorphism classes of DGCAs.
\end{theorem}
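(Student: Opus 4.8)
The plan is to reduce the statement to a finite but substantial case-check, organized by the distribution pattern of nonzero entries. First I would observe that by Theorem~\ref{th:GAA_iso}, $\frakp(C)\cong\frakp(C')$ (as graded algebras) forces $c_{ij}=0\iff c'_{ij}=0$, since the $b_i$ are all nonzero; hence $C\sim C'$ is always necessary. The substance is the converse: given $C\sim C'$ in $\mathcal M_n$ with $n\leq 7$, I must produce nonzero scalars $(b_i)_{i\in[n]}$ solving the system \eqref{eq:GAA_iso}, i.e. $b_ib_j/b_{i+j}=c_{ij}/c'_{ij}$ for every pair $(i,j)$ with $i+j\leq n$ and $c_{ij}\neq 0$. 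Since $\mathbbm k$ is algebraically closed (in particular has many roots), the obstruction is purely combinatorial: the $\log$-linear system in the unknowns $\beta_i=\log b_i$ must be consistent. Consistency fails exactly when some $\ZZ$-linear relation among the equations $\beta_i+\beta_j-\beta_{i+j}$ (with coefficients forced to vanish on the left) produces a nontrivial monomial constraint on the ratios $c_{ij}/c'_{ij}$; the associativity constraints \eqref{eq:ass_coef} on both $C$ and $C'$ are precisely what make all such potential constraints automatically satisfied. So the key step is: \emph{for each admissible support $S\subseteq\{(i,j):i+j\leq n\}$ and each matrix in $\mathcal M_n$ with that support, the multiplicative relations among the entries forced by \eqref{eq:ass_coef} are generated by the ``obvious'' ones coming from a spanning subfamily, and these are ratio-invariant.}

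Concretely, I would proceed as follows. (1) Enumerate, for each $n\in\{1,\dots,7\}$, the possible supports $S$ of a matrix in $\mathcal M_n$; this is a finite list, and one cuts it down drastically using that $c_{11}$ and the associativity cocycle conditions heavily restrict which patterns are realizable. (2) For each realizable $S$, pick a ``fundamental'' subset $S_0\subseteq S$ of index pairs such that the values $(c_{ij})_{(i,j)\in S_0}$ can be prescribed freely (in $\mathbbm k^*$) and all remaining entries $c_{ij}$, $(i,j)\in S$, are then uniquely determined rational monomials in these; verifying this uses \eqref{eq:ass_coef} repeatedly, peeling off entries in order of $i+j$. (3) Given $C,C'$ with support $S$, set up \eqref{eq:GAA_iso} on $S_0$ only: this subsystem is triangular/acyclic in a suitable ordering of the $b_i$'s, so it has a solution with all $b_i\neq 0$ (here algebraic closedness guarantees the needed roots exist). (4) Finally check that this solution automatically satisfies \eqref{eq:GAA_iso} on all of $S$, not just $S_0$: because both $c_{ij}$ and $c'_{ij}$ for $(i,j)\in S\setminus S_0$ are the \emph{same} monomial expressions in their respective fundamental entries, the identity $b_ib_j c_{ij}=b_ib_j\cdot(\text{monomial in }c_{S_0})$ propagates from the fundamental relations. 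Steps (2) and (4) are where associativity of \emph{both} algebras is used in an essential way.

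The main obstacle, and the reason the theorem is stated only for $n\leq 7$, is step (1)–(2): showing that for every realizable support the ``fundamental set can be chosen freely and determines the rest'' is exactly the statement that breaks at $n=8$. At $n\leq 7$ one can verify by hand (or a short computer enumeration) that no support admits a genuine moduli of associative structures with that support — equivalently, the variety cut out in $(\mathbbm k^*)^{|S|}$ by \eqref{eq:ass_coef} is a single $\Aut$-orbit, i.e. a single $\sim$-class maps to a single isomorphism class. For $n\geq 8$ there is a support where the associativity equations leave a free parameter that cannot be absorbed by the diagonal rescaling \eqref{eq:GAA_iso}, producing a one-parameter (hence infinite, over a field, but at least positive-dimensional) family of non-isomorphic DGCAs — which is precisely the phenomenon the paper then analyzes via central extensions in Section~\ref{sec:central_extension}. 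I would therefore present the $n\leq 7$ verification as a case analysis on supports, deferring the explicit exhibition of all resulting isomorphism classes to Section~\ref{sec:low_dim} as the theorem statement already indicates, and I expect the bookkeeping of realizable supports (using \eqref{eq:ass_coef} to rule out most patterns) to be the longest part of the argument.
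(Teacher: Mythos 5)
Your proposal is correct and takes essentially the same route as the paper: necessity of $C\sim C'$ is immediate from Theorem~\ref{th:GAA_iso} because all $b_i\neq 0$, and sufficiency is reduced to a finite, support-by-support exhibition of diagonal rescalings $(b_i)$ solving \eqref{eq:GAA_iso}, which is precisely what the paper defers to and carries out via the explicit tables of Section~\ref{sec:low_dim}. One minor remark: no roots (hence no appeal to algebraic closedness) are needed in your step (3) — the explicit $b_i$ in the paper's tables are rational monomial expressions in the $c'_{ij}$, which is why the classification in fact holds over an arbitrary base field, as the paper notes.
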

\begin{proof}
The stated classification of DGCAs of low dimension can be checked directly, and we will do it in Section~\ref{sec:low_dim}.
\end{proof}

Unfortunately, the complexity of classification of DGCAs in general is far beyond our expectation.
If $\frakp(C)$ and $\frakp(C')$ are isomorphic to each other, then the positions of nonzero entries of $C$ and $C'$ are the same by Eq.~\eqref{eq:GAA_iso}, so $C\sim C'$. But the opposite is not necessarily true when $n>7$. After some trial, we find a counterexample of dimension $n=8$ as follows.
\begin{exam}\label{ex:counter}
Let $C=(c_{ij})_{i,j\in [8]}$ be any $8\times 8$ symmetric matrix with nonzero entries
$$c_{24}=c_{42},\ c_{33},\ c_{25}=c_{52},\ c_{34}=c_{43},\ c_{35}=c_{53},\ c_{44}.$$
It is clear that $C\in\mathcal M_8$. Let $C'$ be the $(0,1)$-matrix such that $C\sim C'$.

According to Eq.~\eqref{eq:GAA_iso}, if $\frakp(C)\cong\frakp(C')$, then there exists $(b_{i})_{i\in [8]}\in (\mathbbm k^*)^{\times 8}$ such that
\begin{align*}
b_6&=b_2b_4c_{24}=b_3^2c_{33},\\
b_7&=b_2b_5c_{25}=b_3b_4c_{34},\\
b_8&=b_3b_5c_{35}=b_4^2c_{44}.
\end{align*}
Hence, we have $b_4=\dfrac{b_3^2c_{33}}{b_2c_{24}}$ and then
$$b_5=\dfrac{b_3b_4c_{34}}{b_2c_{25}}=\dfrac{b_3c_{34}}{b_2c_{25}}\cdot\dfrac{b_3^2c_{33}}{b_2c_{24}}
=\dfrac{b_3^3c_{33}c_{34}}{b_2^2c_{24}c_{25}}.$$
Also,
$$ b_5=\dfrac{b_4^2c_{44}}{b_3c_{35}}=\dfrac{c_{44}}{b_3c_{35}}\cdot\left(\dfrac{b_3^2c_{33}}{b_2c_{24}}\right)^2
=\dfrac{b_3^3c_{33}^2c_{44}}{b_2^2c_{24}^2c_{35}}.$$
Therefore, we obtain the constraint $c_{25}c_{33}c_{44}=c_{24}c_{34}c_{35}$, which means that the only condition $C\sim C'$ can not guarantee the isomorphism $\frakp(C)\cong\frakp(C')$.

In fact, there have already been infinitely many isomorphism classes of DGCAs of dimension 8 (see Example~\ref{ex:counter'}).
As a result, we need to search for more elaborate technique to study the isomorphism classes of DGCAs.
\end{exam}

\section{Classification by the Skjelbred-Sund method of central extensions}\label{sec:central_extension}
The Skjelbred-Sund method is well-known for classifying nilpotent Lie algebras \cite{SS,De1}.
In this section, we study the classification problem of central extensions of finite-dimensional DGCAs by a graded analogue of such a method, which has also been adopted to the classification of nilpotent associative algebras of small dimension \cite{De}; see also the recently published survey \cite{Kay} for other cases where it was developed.

\subsection{Central extensions of diagonally graded commutative algebras}
First we recall some terminology.
Let $A$ be a commutative associative algebra, $V$ a vector space, and $\theta: A\otimes A\to V$ a symmetric bilinear
map.

Set $A_\theta = A\oplus V$ . For $a, b \in A, v,w \in V$, we define
\begin{equation}\label{eq:central_extension}
(a+v)\cdot_\theta(b+w)= ab+\theta(a,b).
\end{equation}
Then $(A_\theta,\cdot_\theta)$ is a commutative associative algebra if and only if
$\theta(ab, c) = \theta(a, bc)$
for all $a, b, c \in A$. All these symmetric invariant bilinear maps $\theta$ are exactly the $2$-cocycles of $A$ with coefficients in the trivial $A$-module $V$, when one studies the commutative cohomology of $A$ \cite{Ha}. The space of all $2$-cocycles is denoted by
$Z^2(A,V)$. The algebra $A_\theta$ is called a {\bf central extension} of $A$
by $V$ (such that $A_\theta V = V A_\theta = 0$).

Let $f:A \to V$ be a linear map, and define $\delta f(a,b) = f(ab)$. Then $\delta f$ is a
2-coboundary. The space of all 2-coboundaries is denoted by $B^2(A,V)$. Note that $A_\theta\cong
A_{\theta+\delta f}$ and particularly $A_{\delta f}\cong A\times V$, the direct product of algebras $A$ and $V$, which we call the {\bf trivial} central extension. Therefore, we consider the second cohomology space $H^2(A,V)=Z^2(A,V)/B^2(A,V)$.

Correspondingly, we can define the graded version of central extension $A_\theta$ of a commutative associative graded algebra $A$ by a graded module $V$, requiring that the $2$-cocycle $\theta$ is graded.
Also, we use the distinct notations $Z_g^2(A,V),B_g^2(A,V)$ and $H_g^2(A,V)$ for the graded version of the commutative cohomology.
Now for our main target, namely DGCAs, we actually have
\begin{prop}\label{prop:DGCA_ext}
Given any $n$-dimensional DGCA $\frakp(C)$ with the fixed homogeneous linear basis $\{p_{i}\}_{i\in [n]}$, let $\bar\frakp(\bar C)$ be its $(n-1)$-dimensional truncation as in Definition~\ref{defn:dgca}. Then
$\frakp(C)$ is a central extension of $\bar\frakp(\bar C)$ by the $1$-dimensional homogeneous vector space $V$ spanned by $p_n$.
\end{prop}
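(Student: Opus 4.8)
The plan is to verify directly that $\frakp(C)$ matches the structure of a $1$-dimensional central extension of its truncation, by unwinding Definition~\ref{defn:dgca} and the central extension formula~\eqref{eq:central_extension}. First I would set $A=\bar\frakp(\bar C)$, with homogeneous basis $\{p_i\}_{i\in[n-1]}$, and let $V=\mathbbm k p_n$, a $1$-dimensional graded vector space concentrated in degree $n$. As a graded vector space we plainly have $\frakp = A\oplus V$ (direct sum of the spans of $\{p_i\}_{i\in[n-1]}$ and $\{p_n\}$), so the only thing to check is that the multiplication on $\frakp(C)$ agrees with $\cdot_\theta$ for a suitable graded $2$-cocycle $\theta\colon A\otimes A\to V$.

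The natural candidate is $\theta(p_i,p_j)=c_{ij}\,p_n$ when $i+j=n$ and $\theta(p_i,p_j)=0$ otherwise, extended symmetrically and bilinearly; note $\theta$ is graded of degree $n$ because it is supported on pairs with $i+j=n$ and lands in $V=\mathbbm k p_n$. With this choice, for $i,j\in[n-1]$ the product in $\frakp(C)$ is $p_ip_j=c_{ij}p_{i+j}$, and this equals $\bar c_{ij}p_{i+j}+\theta(p_i,p_j)$: indeed if $i+j<n$ then $\bar c_{ij}=c_{ij}$ and $\theta(p_i,p_j)=0$, while if $i+j=n$ then $\bar c_{ij}p_{i+j}=0$ (since $p_n\notin\bar\frakp$, i.e. the truncation sets this term to zero) and $\theta(p_i,p_j)=c_{ij}p_n$, and if $i+j>n$ both sides vanish. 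Hence the multiplication on $\frakp(C)$ restricted to $A$ is exactly $\cdot_\theta$, and since $p_n\cdot p_j=c_{nj}p_{n+j}=0$ for all $j\geq 1$ (as $n+j>n$), the subspace $V$ is a central (in fact annihilating) ideal, matching the requirement $A_\theta V=VA_\theta=0$.

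It remains to confirm that $\theta\in Z_g^2(A,V)$, i.e. that $\theta$ is symmetric and satisfies $\theta(ab,c)=\theta(a,bc)$ for all $a,b,c\in A$. Symmetry is immediate from the symmetry of $C$. For the cocycle identity it suffices to check it on basis elements $p_i,p_j,p_k$ with $i,j,k\in[n-1]$: both sides are nonzero only when $i+j+k=n$, in which case $\theta(p_ip_j,p_k)=c_{ij}c_{i+j,k}\,p_n$ and $\theta(p_i,p_jp_k)=c_{jk}c_{i,j+k}\,p_n$, and these agree precisely by the associativity constraint~\eqref{eq:ass_coef} for $C$ (here one uses that $i+j,j+k\leq n-1$ so that the relevant entries of $\bar C$ coincide with those of $C$). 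All other cases give $0=0$. This establishes that $\frakp(C)=A_\theta$ is a central extension of $\bar\frakp(\bar C)$ by $V$.

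There is no real obstacle here; the only mildly delicate point is the careful bookkeeping at the boundary $i+j=n$, where the truncated coefficient $\bar c_{ij}$ is deliberately set to $0$ precisely so that the ``missing'' product data $c_{ij}p_n$ is relocated into the cocycle $\theta$ — so one must be attentive that the two definitions dovetail exactly rather than double-count or drop this term. Everything else is a direct transcription of~\eqref{eq:GAA}, \eqref{eq:ass_coef} and~\eqref{eq:central_extension}.
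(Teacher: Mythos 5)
Your proposal is correct and follows essentially the same route as the paper: you define the same graded $2$-cocycle $\theta(p_i,p_j)=\delta_{i+j,\,n}c_{ij}p_n$, verify the cocycle identity via Eq.~\eqref{eq:ass_coef}, and identify $\frakp(C)$ with $\bar\frakp(\bar C)_\theta$; your version merely spells out the boundary bookkeeping at $i+j=n$ that the paper leaves implicit.
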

\begin{proof}
In fact, we can define the graded $2$-cocycle $\theta:\bar\frakp(\bar C)\otimes\bar\frakp(\bar C)\to V$ by
\begin{equation*}
\theta(p_i,p_j)=\delta_{i+j,\,n}c_{ij}p_n,\quad \forall i,j\in [n-1],
\end{equation*}
whose cocycle condition is guaranteed by Eq.~\eqref{eq:ass_coef}.
Then $\frakp(C)$ is naturally identified with the central extension $\bar\frakp(\bar C)_\theta$ as commutative associative graded algebras.
\end{proof}

Conversely, for any $(n-1)$-dimensional DGCA $\frakp(C)$ and $1$-dimensional homogeneous vector space $V$ of degree $n$, the central extension $\frakp(C)_\theta$ with $\theta\in Z_g^2(\frakp(C),V)$ is clearly an $n$-dimensional DGCA.
In such a specified situation, any graded linear map from $\frakp(C)$ to $V$ is zero, so $B_g^2(\frakp(C),V)=0$ and $H_g^2(\frakp(C),V)=Z_g^2(\frakp(C),V)$.
Consequently, the trivial isomorphism class of central extensions of $\frakp(C)$ only contains $\frakp(C)_0$.
Also, for any $\theta\in Z_g^2(\frakp(C),V)$
we have coefficients $\theta_i\in\mathbbm k,\ i\in [n-1]$, such that
\begin{equation}\label{eq:cocycle}
\theta(p_i,p_j)=\delta_{i+j,\,n}\theta_ip_n,\quad\forall i,j\in [n-1].
\end{equation}

Let $\sigma:\frakp(C)\to\frakp(C')$ be a graded algebra isomorphism of two $(n-1)$-dimensional DGCAs, and $V=\mathbbm k p_n$ be the $1$-dimensional homogeneous vector space of degree $n$. Define
$\theta_\sigma\in Z_g^2(\frakp(C),V)$ by
\begin{equation}\label{eq:cocycle_action}
\theta_\sigma(x,y)=\theta(\sigma(x),\sigma(y))
\end{equation}
for any $\theta\in Z_g^2(\frakp(C'),V)$ and $x,y\in \frakp(C)$.

Let $\Aut(\frakp(C))$ be the graded automorphism group of $\frakp(C)$, consisting of diagonal automorphisms $\varphi$ such that
$\varphi(p_{i})=b_{i}p_{i}$ for certain $b_i\in\mathbbm k^*\ (i\in [n])$
by Theorem~\ref{th:GAA_iso}, so we write $\varphi=(b_{i})_{i\in [n]}$ for convenience. Then Eq.~\eqref{eq:cocycle_action} defines a right action of $\Aut(\frakp(C))$ on $Z_g^2(\frakp(C),V)$. 

As a special case of Theorem~\ref{th:GAA_iso}, we have the following result.
\begin{prop}\label{prop:aut}
Given any $C=(c_{ij})_{i,j\in [n]}\in \mathcal M_n$,
\begin{equation}\label{eq:aut}
\Aut(\frakp(C))=\left\{(b_i)_{i\in [n]}\in(\mathbbm k^*)^{\times n}\,\big|\, b_{i+j}=b_ib_j\ {\rm if}\ c_{ij}\neq 0\right\}.
\end{equation}
In particular, if $C\sim C'$ in $\mathcal M_n$, then $\Aut(\frakp(C))=\Aut(\frakp(C'))$.

Morevoer, $\Aut(\frakp(C))$ is a closed subgroup of the torus $(\mathbbm k^*)^{\times n}$
of dimension $n- n(C)+t(C)$, where
$$n(C)=\#\{(i,j)\in(\mathbb N^+)^{\times 2}\,|\,c_{ij}\neq 0,\,i\leq j\}$$
and $t(C)$ is the number of non-redundant equalities in Eq.~\eqref{eq:ass_coef} satisfying
$c_{ij}c_{i+j,\,k}\neq0$.
\end{prop}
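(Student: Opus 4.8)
The plan is to read \eqref{eq:aut} and the invariance along $\sim$-classes off Theorem~\ref{th:GAA_iso}, then to turn the dimension formula into a rank computation for the exponent matrix of the Laurent-monomial equations cutting $\Aut(\frakp(C))$ out of the torus, the last and hardest ingredient being an induction on $n$ through truncation.

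For the description, apply \eqref{eq:GAA_iso} with $C'=C$: a graded linear automorphism $\varphi=(b_i)_{i\in[n]}$ of $\frakp(C)$ is an algebra automorphism exactly when $(b_{i+j}-b_ib_j)c_{ij}=0$ for all $i,j\ge1$, i.e. when $b_{i+j}=b_ib_j$ whenever $c_{ij}\neq0$; this is \eqref{eq:aut}, and as the condition only involves the positions of the nonzero entries of $C$ it is constant on $\sim$-classes, so $C\sim C'$ forces $\Aut(\frakp(C))=\Aut(\frakp(C'))$. Put $\Gamma=\{(i,j)\mid i\le j,\ c_{ij}\neq0\}$, so $|\Gamma|=n(C)$, and for $(i,j)\in\Gamma$ set $v_{ij}=e_{i+j}-e_i-e_j\in\ZZ^n$, where $e_1,\dots,e_n$ is the standard basis. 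Then $\Aut(\frakp(C))=\{b\in(\mathbbm k^*)^{\times n}\mid b^{v_{ij}}=1\text{ for all }(i,j)\in\Gamma\}$ is the common zero set on the torus of finitely many regular functions and is visibly a subgroup, hence a closed subgroup of $(\mathbbm k^*)^{\times n}$.

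Let $N\subseteq\ZZ^n$ be the subgroup generated by the $v_{ij}$, $(i,j)\in\Gamma$. Then $\Aut(\frakp(C))\cong\Hom(\ZZ^n/N,\mathbbm k^*)$, so $\dim\Aut(\frakp(C))=\rank(\ZZ^n/N)=n-\rank N$. Writing $M$ for the $n(C)\times n$ integer matrix with rows the $v_{ij}$, one has $\rank N=\rank M=n(C)-\dim_\QQ K$, where $K=\{(\lambda_{ij})\in\QQ^{\Gamma}\mid\sum_{(i,j)\in\Gamma}\lambda_{ij}v_{ij}=0\}$ is the space of $\QQ$-linear relations among the $v_{ij}$; hence $\dim\Aut(\frakp(C))=n-n(C)+\dim_\QQ K$ and it remains to prove $\dim_\QQ K=t(C)$. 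One inequality is formal: for $i,j,k\ge1$ with $c_{ij}c_{i+j,k}\neq0$, Eq.~\eqref{eq:ass_coef} forces $c_{jk}c_{i,j+k}\neq0$ as well, so $i+j+k\le n$ and all four unordered pairs $\{i,j\}$, $\{i+j,k\}$, $\{j,k\}$, $\{i,j+k\}$ index elements of $\Gamma$; from $v_{ij}+v_{i+j,k}=e_{i+j+k}-e_i-e_j-e_k=v_{jk}+v_{i,j+k}$ the vector $r_{ijk}:=\mathbf 1_{\{i,j\}}+\mathbf 1_{\{i+j,k\}}-\mathbf 1_{\{j,k\}}-\mathbf 1_{\{i,j+k\}}\in\QQ^{\Gamma}$ (coefficients summed when pairs coincide) lies in $K$, and this is — up to sign — the character cut out on the torus of coefficient tuples with nonzero pattern $\Gamma$ by the corresponding instance of \eqref{eq:ass_coef}. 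Since $t(C)$ counts a maximal independent family of these equalities, $t(C)=\dim_\QQ\mrm{span}\{r_{ijk}\}\le\dim_\QQ K$.

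The substance is the reverse inequality, which I would obtain by proving $\dim\Aut(\frakp(C))=n-n(C)+t(C)$ directly, by induction on $n$ via the truncation $\bar\frakp(\bar C)$ of Definition~\ref{defn:dgca} (the base case $n=1$ being trivial), so that $\dim\Aut(\bar\frakp(\bar C))=(n-1)-n(\bar C)+t(\bar C)$ may be assumed. Forgetting the last coordinate realizes $\Aut(\frakp(C))$ as $\{b\in(\mathbbm k^*)^{\times n}\mid(b_1,\dots,b_{n-1})\in\Aut(\bar\frakp(\bar C)),\ b_n=b_ab_b\text{ for every }(a,b)\in\Gamma\text{ with }a+b=n\}$. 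If $n\in J_C$ there is no such pair, $b_n$ is free, $n(C)=n(\bar C)$, $t(C)=t(\bar C)$, and the formula propagates at once. If $n\notin J_C$, say with $s\ge1$ pairs $(a_1,b_1),\dots,(a_s,b_s)$ of sum $n$, then $n(C)=n(\bar C)+s$ and $\dim\Aut(\frakp(C))=\dim\Aut(\bar\frakp(\bar C))-\dim\overline{\im\psi}$, where $\psi\colon\Aut(\bar\frakp(\bar C))\to(\mathbbm k^*)^{\times(s-1)}$, $b\mapsto\big(b_{a_1}b_{b_1}(b_{a_\ell}b_{b_\ell})^{-1}\big)_{2\le\ell\le s}$, records the $s-1$ gluing conditions $b_{a_1}b_{b_1}=b_{a_\ell}b_{b_\ell}$. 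Every non-redundant instance of \eqref{eq:ass_coef} that involves the new index $n$ is a triple $(i,j,k)$ with $i+j+k=n$, whose pairs $\{i+j,k\}$ and $\{i,j+k\}$ occur among the $(a_\ell,b_\ell)$; because $e_{i+j}+e_k-e_i-e_{j+k}=\bar v_{ij}-\bar v_{jk}$ already lies in the relation lattice of $\bar\frakp(\bar C)$, the two corresponding gluing conditions agree on $\Aut(\bar\frakp(\bar C))$. The one genuinely nontrivial step — which I expect to be the main obstacle — is that these are the \emph{only} dependences among the $s-1$ gluing conditions: any identity $b_{a_\ell}b_{b_\ell}=b_{a_{\ell'}}b_{b_{\ell'}}$ holding on $\Aut(\bar\frakp(\bar C))$ must be forced, through a chain of instances of \eqref{eq:ass_coef}, from lower-degree relations — equivalently, two monomials of equal degree in the generators of $\frakp(C)$ can be proportional only for reasons imposed by associativity. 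Granting this, $\dim\overline{\im\psi}=(s-1)-(t(C)-t(\bar C))$, and substituting into the inductive formula yields $\dim\Aut(\frakp(C))=n-n(C)+t(C)$; comparing with $\dim\Aut(\frakp(C))=n-n(C)+\dim_\QQ K$ gives $\dim_\QQ K=t(C)$ and finishes the proof. This same coincidence statement is precisely what the graph attached to $C$ in Section~\ref{sec:central_extension} is built to encode.
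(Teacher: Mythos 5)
The first half of your argument is sound and coincides with the paper's: \eqref{eq:aut} and its invariance along $\sim$-classes are read off from Theorem~\ref{th:GAA_iso} with $C'=C$, and recasting $\Aut(\frakp(C))$ as the diagonalizable subgroup of $(\mathbbm k^*)^{\times n}$ cut out by the characters $v_{ij}=e_{i+j}-e_i-e_j$ attached to the pairs $i\le j$ with $c_{ij}\neq 0$ correctly reduces the dimension formula to the single claim that the space $K$ of $\QQ$-linear relations among these characters has dimension exactly $t(C)$. Your inequality $t(C)\le\dim_\QQ K$, coming from $v_{ij}+v_{i+j,\,k}=v_{jk}+v_{i,\,j+k}$ for each instance of \eqref{eq:ass_coef} with $c_{ij}c_{i+j,\,k}\neq0$, is the same observation the paper phrases as ``any $3$ of the $4$ equalities imply the remaining one.''

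The genuine gap is the reverse inequality $\dim_\QQ K\le t(C)$, which is the whole content of the dimension statement and which you do not prove: you write ``Granting this'' at exactly the decisive point. The induction through the truncation of Definition~\ref{defn:dgca} does not reduce the difficulty, because the assertion that the only dependences among the $s-1$ gluing conditions on $\Aut(\bar\frakp(\bar C))$ are those forced by the degree-$n$ instances of \eqref{eq:ass_coef} is precisely the claim $\dim_\QQ K=t(C)$ in its inductive guise; moreover your bookkeeping $t(C)-t(\bar C)$ silently assumes that no lower-degree identity changes its redundancy status once the degree-$n$ identities are adjoined. So as written the inductive step is circular (it restates the target) rather than merely deferred. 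The paper argues the count directly on the $n(C)$ equations $b_{i+j}=b_ib_j$: each non-redundant equality in \eqref{eq:ass_coef} with $c_{ij}c_{i+j,\,k}\neq0$ produces exactly one redundancy among them, while equalities involving zero coefficients produce none, whence $\dim\Aut(\frakp(C))=n-n(C)+t(C)$; your lattice formulation is a precise version of this counting, but to close it you would still have to show that no $\QQ$-linear relation among the $v_{ij}$ arises other than those generated by the associativity relations --- the very step you left open.
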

\begin{proof}
First by Eq.~\eqref{eq:GAA_iso}, we obtain the stated description of $\Aut(\frakp(C))$ to  see that it is a closed subgroup of the torus $(\mathbbm k^*)^{\times n}$ subjecting to certain polynomial constraints.

It remains to show that there are exactly $n(C)-t(C)$ non-redundant polynomial equalities therein by the symmetry of $C$, so the dimension of $\Aut(\frakp(C))$ is obtained. In fact, for any $i,j,k\geq 1$ such that
$$c_{ij}c_{i+j,\,k}\stackrel{\eqref{eq:ass_coef}}{=}c_{jk}c_{i,\,j+k}\neq0,$$
we have the following 4 polynomial equalities
$$b_{i+j}=b_ib_j,\,b_{j+k}=b_jb_k,\,b_{i+j+k}=b_{i+j}b_k,\,b_{i+j+k}=b_ib_{j+k},$$
any 3 of which imply the remaining one. On the other hand,
any equality in Eq.~\eqref{eq:ass_coef} involving zero coefficients will not produce redundance.
\end{proof}

\begin{exam}
Given $b\in\mathbbm k^*$, by Eq.~\eqref{eq:aut} we have $\varphi_b=(b^i)_{i\in [n]}\in \Aut(\frakp(C))$ for any $C\in\mathcal M_n$.
\end{exam}

\begin{exam}
Let $C=(c_{ij})_{i,j\in [6]}$ be the symmetric matrix in $\mathcal M_6$ with
$$c_{11}=c_{12}=c_{21}=c_{13}=c_{31}=c_{22}=c_{14}=c_{41}=c_{23}=c_{32}=1$$
and zeros elsewhere. Then Eq.~\eqref{eq:aut} says that
$$\Aut(\frakp(C))=\left\{(b_i)_{i\in [6]}\in(\mathbbm k^*)^{\times 6}\,\big|\, b_2=b_1^2,\,b_3=b_1b_2,\,b_4=b_1b_3=b_2^2,\,b_5=b_1b_4=b_2b_3\right\}.$$
That is, $\Aut(\frakp(C))=\left\{(b_i)_{i\in [6]}\in(\mathbbm k^*)^{\times 6}\,\big|\, b_i=b_1^i,\,\forall i\in [5]\right\}$ is of dimension 2. On the other hand,
$n(C)=6$, and $t(C)=2$ counts the following non-redundant equalities
$$c_{12}c_{13}=c_{11}c_{22},\quad c_{11}c_{23}=c_{13}c_{14},$$
together to deduce the third one, $c_{12}c_{23}=c_{14}c_{22}$. So we also obtain that $$\dim\Aut(\frakp(C))=6-n(C)+t(C)=2.$$
\end{exam}

\begin{theorem}\label{th:isom}
Two central extensions $\frakp(C)_\gamma$ and $\frakp(C')_\theta$ for $\gamma\in Z_g^2(\frakp(C),V)$ and $\theta\in Z_g^2(\frakp(C'),V)$ are isomorphic if and only if there exists a graded algebra isomorphism $\sigma:\frakp(C)\to\frakp(C')$ such that $\theta_\sigma$ and $\gamma$ are colinear in $Z_g^2(\frakp(C),V)$.

In particular, $\frakp(C)_\theta\cong \frakp(C)_\gamma$ for nonzero $2$-cocycles $\theta,\gamma\in Z_g^2(\frakp(C),V)$ if and only if $[\theta],[\gamma]$ lie in the same $\Aut(\frakp(C))$-orbit of
the Grassmannian of $1$-dimensional subspaces of $Z_g^2(\frakp(C),V)$, where $[\theta]={\rm span}_\mathbbm k\{\theta\}$.
\end{theorem}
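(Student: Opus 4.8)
The plan is to exploit the fact, supplied by Theorem~\ref{th:GAA_iso}, that both $\frakp(C)_\gamma$ and $\frakp(C')_\theta$ are $n$-dimensional DGCAs built on the same diagonally graded space $\frakp$ with basis $\{p_i\}_{i\in[n]}$, so that \emph{any} graded algebra isomorphism $\Phi$ between them is automatically diagonal, $\Phi(p_i)=b_ip_i$ with $b_i\in\mathbbm k^*$ for all $i\in[n]$. The entire statement then reduces to a bookkeeping comparison of the structure constants of $\frakp(C)_\gamma$ and $\frakp(C')_\theta$ in two disjoint ranges: the products $p_i\cdot p_j$ with $i+j<n$, which live inside the truncations, and the products with $i+j=n$, which land in $V=\mathbbm k p_n$ and carry the cocycle data via Eq.~\eqref{eq:cocycle}. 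The guiding idea is that the ``lower part'' $(b_i)_{i\in[n-1]}$ of $\Phi$ is precisely a candidate for $\sigma$, while the single ``top coefficient'' $b_n$ is precisely the scalar witnessing colinearity of $\theta_\sigma$ and $\gamma$.

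For the forward implication I would start from a graded isomorphism $\Phi=(b_i)_{i\in[n]}$ and set $\sigma(p_i)=b_ip_i$ for $i\in[n-1]$. Comparing the identity $\Phi(p_i\cdot_\gamma p_j)=\Phi(p_i)\cdot_\theta\Phi(p_j)$ in the range $i+j<n$ reproduces exactly the relation $b_{i+j}c_{ij}=b_ib_jc'_{ij}$ of Eq.~\eqref{eq:GAA_iso}, so $\sigma:\frakp(C)\to\frakp(C')$ is a graded algebra isomorphism of the truncations; here it is crucial that for $i+j\geq n$ both truncations have vanishing products, so the homomorphism condition for $\sigma$ in that range is vacuous and the cocycle data cannot obstruct $\sigma$. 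Comparing the same identity in the range $i+j=n$, where $p_i\cdot_\gamma p_j=\gamma_ip_n$ and $p_i\cdot_\theta p_j=\theta_ip_n$, gives $b_n\gamma_i=b_ib_j\theta_i=(\theta_\sigma)_i$ for every relevant $i$, that is, $\theta_\sigma=b_n\gamma$ with $b_n\neq0$, which is the asserted colinearity.

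For the converse I would reverse this construction: given a graded isomorphism $\sigma=(b_i)_{i\in[n-1]}$ of the truncations with $\theta_\sigma=\lambda\gamma$, extend it by $\Phi(p_i)=b_ip_i$ for $i\in[n-1]$ and $\Phi(p_n)=\lambda p_n$ (and, in the degenerate case $\gamma=\theta_\sigma=0$, an arbitrary nonzero scalar for $b_n$). The homomorphism check again splits into the two ranges: for $i+j<n$ it holds because $\sigma$ is already an algebra map, and for $i+j=n$ it reduces to $\lambda\gamma_i=(\theta_\sigma)_i$, which is exactly the colinearity hypothesis. Since $\lambda\neq0$ whenever the cocycles are nonzero, $\Phi$ is invertible and hence a graded algebra isomorphism. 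The ``in particular'' clause is then the specialization $C=C'$ for nonzero $\theta,\gamma$: now $\sigma$ runs over $\Aut(\frakp(C))$, colinearity of the nonzero cocycles $\theta_\sigma$ and $\gamma$ means $[\theta_\sigma]=[\gamma]$ as lines, and since $\theta\mapsto\theta_\sigma$ is the right $\Aut(\frakp(C))$-action of Eq.~\eqref{eq:cocycle_action} it descends to the Grassmannian of $1$-dimensional subspaces, so the condition says precisely that $[\theta]$ and $[\gamma]$ lie in one $\Aut(\frakp(C))$-orbit.

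The individual computations are routine; the only place where the graded/truncation setup does genuine work — and hence the main point to get right — is the clean separation of the two degree ranges. One must verify that the degree-$n$ products, which are nonzero in the extensions but vanish identically in the truncations, neither prevent $\sigma$ from being a bona fide algebra isomorphism nor are lost when passing between $\Phi$ and $\sigma$. Tracking the single extra coefficient $b_n$ as the colinearity scalar is what couples the two halves of the argument, and this is the bookkeeping to handle with care.
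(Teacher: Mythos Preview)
Your proposal is correct and follows essentially the same approach as the paper: both use Theorem~\ref{th:GAA_iso} to write any graded isomorphism diagonally as $(b_i)_{i\in[n]}$, take $\sigma$ to be its restriction to the first $n-1$ coordinates, and identify $b_n$ as the colinearity scalar via the degree-$n$ products; the converse is obtained by extending $\sigma$ by that scalar on $p_n$. Your explicit separation into the ranges $i+j<n$ and $i+j=n$, and your remark on the degenerate zero-cocycle case, make the argument slightly more detailed than the paper's version, but the underlying idea is identical.
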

\begin{proof}
As discussed in Prop.~\ref{prop:DGCA_ext}, we can rewrite two matrices $C_\gamma$ and $C'_\theta$ in $\mathcal M_n$ augmented from $C$ and $C'$ respectively as
$D=(d_{ij})_{i,j\in [n]}$ and $D'=(d'_{ij})_{i,j\in [n]}$ instead, then
$$d_{ij}=\begin{cases}
c_{ij},& i+j<n,\\
\gamma_i, & i+j=n,\\
0,& {\rm otherwise},
\end{cases}\quad d'_{ij}=\begin{cases}
c'_{ij},& i+j<n,\\
\theta_i, & i+j=n,\\
0,& {\rm otherwise}.
\end{cases}$$

Suppose that $\varphi:\frakp(C)_\gamma\to \frakp(C')_\theta$ is a graded algebra isomorphism. By Theorem~\ref{th:GAA_iso}, there exists a tuple $(b_{i})_{i\in [n]}$ of nonzero coefficients such that $\varphi(p_i)=b_ip_i$ and
$$b_{i+j}d_{ij}=b_{i}b_{j}d'_{ij},\quad\forall i,j\geq1.$$
In particular, the restriction of $\varphi$ on $\frakp(C)$ is an isomorphism from $\frakp(C)$ to $\frakp(C')$. We set $\sigma=\varphi|_{\frakp(C)}$. Then for any $i,j\geq1$,
 \begin{eqnarray*}
 \theta_\sigma(p_i,p_j)&\stackrel{\eqref{eq:cocycle_action}}{=}&\theta(\sigma(p_i),\sigma(p_j))\\
 &=&b_{i}b_{j}\theta(p_i,p_j)\\
 &\stackrel{\eqref{eq:cocycle}}{=}&\delta_{i+j,\,n}b_{i}b_{j}d'_{ij}p_n\\
 &=&\delta_{i+j,\,n}b_{i+j}d_{ij}p_n\\
 &\stackrel{\eqref{eq:cocycle}}{=}&b_n\gamma(p_i,p_j).
 \end{eqnarray*}
That is, $\theta_\sigma=b_n\gamma$. Conversely, if there exist a graded algebra isomorphism $\sigma:\frakp(C)\to\frakp(C')$ and $b\in\mathbbm k^*$ such that $\theta_\sigma=b\gamma$, one can define the graded linear isomorphism $\varphi:\frakp(C)_\gamma\to \frakp(C')_\theta$ by letting $\varphi|_{\frakp(C)}=\sigma$ and $\varphi(p_n)=bp_n$. Indeed, $\varphi$ is clearly bijective. Moreover, for any $ i,j\in[n-1]$, if $i+j<n$, then
$$\varphi(p_i\cdot_\gamma p_j) \stackrel{\eqref{eq:central_extension}}{=} \varphi(p_ip_j)=\sigma(p_ip_j)=\sigma(p_i)\sigma(p_j)=\varphi(p_i) \varphi(p_j) \stackrel{\eqref{eq:central_extension}}{=}\varphi(p_i)\cdot_\theta \varphi(p_j).$$
Otherwise, if $i+j= n$, then
$$\varphi(p_i\cdot_\gamma p_j) \stackrel{\eqref{eq:central_extension}}{=} \varphi(\gamma(p_i,p_j)) \stackrel{\eqref{eq:cocycle}}{=} b\gamma(p_i,p_j)=\theta_\sigma(p_i,p_j),$$
and
$$\varphi(p_i)\cdot_\theta \varphi(p_j) \stackrel{\eqref{eq:central_extension}}{=}  \theta(\varphi(p_i),\varphi(p_j))
=\theta(\sigma(p_i),\sigma(p_j))\stackrel{\eqref{eq:cocycle_action}}{=}\theta_\sigma(p_i,p_j),$$
so it is easy to see that $\varphi$ is an algebra map.
\end{proof}

\begin{coro}\label{cor:stab}
Given any $C\in\mathcal M_{n-1}$ and $2$-cocycle $\theta\in Z_g^2(\frakp(C),V)$, we have $\frakp(C)_{\theta_{\varphi_b}}\cong \frakp(C)_\theta$ for any $\varphi_b=(b^i)_{i\in [n-1]}\in \Aut(\frakp(C))$ with $b\in \mathbbm k^*$.
\end{coro}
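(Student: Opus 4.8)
The corollary is a direct application of Theorem~\ref{th:isom} with $C' = C$. The plan is to exhibit, for each $\varphi_b = (b^i)_{i\in[n-1]}$, a graded algebra isomorphism $\sigma:\frakp(C)\to\frakp(C)$ together with a scalar so that the hypothesis $\theta_\sigma = (\text{scalar})\cdot\theta$ of Theorem~\ref{th:isom} is met; then that theorem immediately yields $\frakp(C)_{\theta_{\varphi_b}}\cong\frakp(C)_\theta$.

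First I would take $\sigma = \varphi_b$ itself, viewed as an element of $\Aut(\frakp(C))$: this is legitimate because the preceding example records that $\varphi_b\in\Aut(\frakp(C))$ for every $C\in\mathcal M_{n-1}$ and $b\in\mathbbm k^*$, so $\sigma$ is indeed a graded algebra automorphism of $\frakp(C)$. Next I would compute $\theta_\sigma$ directly from the definition~\eqref{eq:cocycle_action} and the coordinate form~\eqref{eq:cocycle} of a graded $2$-cocycle: for $i,j\in[n-1]$,
\begin{equation*}
\theta_{\varphi_b}(p_i,p_j) = \theta(\varphi_b(p_i),\varphi_b(p_j)) = b^{i}b^{j}\,\theta(p_i,p_j) = b^{i+j}\,\delta_{i+j,\,n}\theta_i p_n = b^{n}\,\theta(p_i,p_j),
\end{equation*}
where the Kronecker delta lets me replace $b^{i+j}$ by the constant $b^n$. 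Hence $\theta_{\varphi_b} = b^n\,\theta$, so $[\theta_{\varphi_b}]$ and $[\theta]$ span the same line in $Z_g^2(\frakp(C),V)$.

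Finally, applying the first statement of Theorem~\ref{th:isom} to the graded algebra isomorphism $\sigma=\varphi_b:\frakp(C)\to\frakp(C)$, with $\gamma=\theta_{\varphi_b}$ and $\theta$ itself in the role of the cocycle on the target, the colinearity $\theta_{\varphi_b} = b^n\theta$ gives $\frakp(C)_{\theta_{\varphi_b}}\cong\frakp(C)_{\theta}$, as claimed. There is essentially no obstacle here: the only point requiring a moment's care is the identity $\theta_\sigma = b^n\theta$, which is exactly the computation carried out inside the proof of Theorem~\ref{th:isom} (with the tuple $(b_i)$ there specialized to $b_i=b^i$), so the corollary is genuinely immediate once that theorem is in hand.
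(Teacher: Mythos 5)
Your proof is correct and follows essentially the same route as the paper: both reduce the corollary to Theorem~\ref{th:isom} and verify the colinearity via the computation $\theta_{\varphi_b}(p_i,p_j)=b^{i+j}\theta(p_i,p_j)=b^{n}\theta(p_i,p_j)$, where the Kronecker delta in Eq.~\eqref{eq:cocycle} turns $b^{i+j}$ into $b^n$. The paper additionally remarks that any scalar $b'$ can be realized as $b^n$ by algebraic closedness, but that observation is not needed for the statement you proved.
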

\begin{proof}
By Theorem~\ref{th:isom}, we only need to show that $\theta_{\varphi_b}$ and $\theta$ are colinear.
In fact,
$$\theta_{\varphi_b}(p_i,p_j)\stackrel{\eqref{eq:cocycle_action}}{=}\theta(\varphi_b(p_i),\varphi_b(p_j))
=b^{i+j}\theta(p_i,p_j)\stackrel{\eqref{eq:cocycle}}{=}b^n\theta(p_i,p_j),$$
so $\theta_{\varphi_b}=b^{n}\theta$.
Furthermore, given any $b'\in\mathbbm k^*$, we can actually take $b\in\mathbbm k^*$ such that $b^n=b'$, as $\mathbbm k$ is algebraically closed. Then $\theta_{\varphi_b}=b^n\theta=b'\theta$.
\end{proof}

\noindent
{\bf Convention.} Using the notation in Eq.~\eqref{eq:cocycle} we can identify any $2$-cocycle $\theta\in Z_g^2(\frakp(C),V)$
with a tuple $(\theta_i)_{i\in [n-1]}\in \mathbbm k^{n-1}$ subjecting to the following binomial relations, \begin{equation}\label{eq:cocycle_theta}
\theta_i=\theta_{n-i},\quad\theta_{i+j}c_{ij}=\theta_{i}c_{j,\,n-i-j}
\end{equation}
for any $i,j\geq 1$ satisfying $i+j\leq n-1$.
So
$$Z_g^2(\frakp(C),V)=\left\{(\theta_i)_{i\in [n-1]}\in \mathbbm k^{n-1}\,\big|\,
\theta_i=\theta_{n-i},\ \theta_{i+j}c_{ij}=\theta_{j+k}c_{jk},\ \forall i,j,k\geq1,\ i+j+k=n\right\}$$
is a subvariety of the affine space $\mathbbm k^{n-1}$.

\begin{prop}\label{prop:aut_stab}
Given any $C\in\mathcal M_{n-1}$ and nonzero $2$-cocycle $\theta$ in $Z_g^2(\frakp(C),V)$, the stabilizer of the $1$-dimensional subspace $[\theta]$ of $Z_g^2(\frakp(C),V)$ under the action of $\Aut(\frakp(C))$ is given by
\begin{equation}\label{eq:aut_stab}
{\rm Stab}_{\Aut(\frakp(C))}[\theta]=\{(b_i)_{i\in [n-1]}\in \Aut(\frakp(C))\,|\, b_ib_{n-i}=b_jb_{n-j}\ \mbox{ if }\ \theta_i\theta_j\neq 0\},
\end{equation}
\end{prop}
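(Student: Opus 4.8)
The plan is to make the right action of $\Aut(\frakp(C))$ on $Z_g^2(\frakp(C),V)$ completely explicit in the tuple coordinates of Eq.~\eqref{eq:cocycle_theta}, and then read off the stabilizer condition directly. Take $\varphi=(b_i)_{i\in[n-1]}\in\Aut(\frakp(C))$. For $i,j\geq1$ with $i+j=n$ one computes
$$\theta_\varphi(p_i,p_j)\stackrel{\eqref{eq:cocycle_action}}{=}\theta(\varphi(p_i),\varphi(p_j))=b_ib_j\,\theta(p_i,p_j)\stackrel{\eqref{eq:cocycle}}{=}b_ib_{n-i}\,\theta_i\,p_n ,$$
so that, under the identification $\theta\leftrightarrow(\theta_i)_{i\in[n-1]}$, the transformed cocycle $\theta_\varphi$ corresponds to the tuple $(b_ib_{n-i}\theta_i)_{i\in[n-1]}$; this tuple automatically satisfies the relations of Eq.~\eqref{eq:cocycle_theta} because $\theta_\varphi\in Z_g^2(\frakp(C),V)$.

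Next I would translate colinearity into coordinates. By definition $\varphi$ lies in ${\rm Stab}_{\Aut(\frakp(C))}[\theta]$ exactly when $\theta_\varphi$ and $\theta$ are colinear, i.e. when there is some $\lambda\in\mathbbm k^*$ with $b_ib_{n-i}\theta_i=\lambda\theta_i$ for every $i\in[n-1]$. Since each $b_i\in\mathbbm k^*$, this is equivalent to requiring $b_ib_{n-i}=\lambda$ for every index $i$ with $\theta_i\neq0$. Because $\theta\neq0$ such an index exists, and for it $\lambda=b_ib_{n-i}$ is automatically a nonzero scalar; conversely, any tuple $(b_i)\in\Aut(\frakp(C))$ for which $b_ib_{n-i}$ is constant on the set $\{\,i\mid\theta_i\neq0\,\}$ produces $\theta_\varphi=\lambda\theta$ with $\lambda$ that constant value. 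Hence $\varphi$ lies in the stabilizer if and only if $b_ib_{n-i}$ takes one and the same value over all indices $i$ with $\theta_i\neq0$, which is precisely the pairwise condition $b_ib_{n-i}=b_jb_{n-j}$ whenever $\theta_i\theta_j\neq0$ appearing in Eq.~\eqref{eq:aut_stab}.

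I do not expect a genuine obstacle here: the content is essentially the bookkeeping of rewriting ``$\theta_\varphi\in\mathbbm k\theta$'' in coordinates. The only points deserving a line of care are the logical equivalence between ``there exists a fixed $\lambda$ with $b_ib_{n-i}=\lambda$ for all $i$ with $\theta_i\neq0$'' and the pairwise formulation, and the remark that such $\lambda$ is automatically nonzero (using $b_i\in\mathbbm k^*$ together with $\theta\neq0$), so that colinearity in the Grassmannian genuinely holds rather than $\theta_\varphi$ collapsing to $0$.
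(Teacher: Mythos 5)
Your proposal is correct and follows essentially the same route as the paper: compute $\theta_\varphi$ in coordinates as $(b_ib_{n-i}\theta_i)_{i\in[n-1]}$ and rewrite $[\theta_\varphi]=[\theta]$ as the existence of a single scalar $b$ with $b_ib_{n-i}=b$ whenever $\theta_i\neq0$, which is the pairwise condition in Eq.~\eqref{eq:aut_stab}. Your extra remarks (the scalar is automatically nonzero, and the equivalence of the uniform-constant and pairwise formulations) are exactly the small points the paper leaves implicit.
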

\begin{proof}
For any $\sigma=(b_i)_{i\in [n-1]}\in \Aut(\frakp(C))$, we know that
\begin{eqnarray*}
[\theta]\cdot\sigma=[\theta] &\Leftrightarrow&  [\theta_\sigma]=[\theta]\\
&\Leftrightarrow& \exists\, b \in \mathbbm k^*\ \mbox{such that}\ \theta_\sigma=b\theta\\
&\stackrel{\eqref{eq:cocycle},\,\eqref{eq:cocycle_action}}{\Leftrightarrow}& \exists\, b \in \mathbbm k^*\ \mbox{such that}\
b_ib_{n-i}=b\ {\rm whenever}\ \theta_i\neq 0,
\end{eqnarray*}
which implies the desired description of ${\rm Stab}_{\Aut(\frakp(C))}[\theta]$.
\end{proof}

\subsection{The study of central extensions via the associated graphs}
Next we introduce a crucial combinatorial tool to study the central extensions of DGCAs.
\begin{defn}\label{defn:graph}
Given any $C\in\mathcal M_{n-1}$ with $n\geq 2$, we define a undirected simple graph ${\rm Gr}(C)$ with respect to $C$ as follows. Its vertex set is $[n-1]$, and any two distinct vertices $p,q$ are adjacent
if and only if $p+q=n$ or
there exist $i,j,k\geq 1$ such that
\begin{equation}\label{eq:graph}
i+j+k=n,\ p=i+j,\ q=j+k,\ c_{ij}c_{jk}\neq 0.
\end{equation}
A connected component of ${\rm Gr}(C)$ is called {\bf contractible} if there exist $i,j\geq 1$ such that this connected component contains the vertex $i+j$, and $c_{ij}\neq0$ but $c_{j,\,n-i-j}=0$. Otherwise, we say that it is {\bf generic}.
For any $p\in [n-1]$, let ${\rm Gr}^{[p]}(C)$ be the connected component of ${\rm Gr}(C)$
containing vertex $p$.

\end{defn}

\begin{remark}\label{rem:graph_triple}
For any $C\in\mathcal M_{n-1}$ with $n\geq 2$, two vertices $p,q\in [n-1]$ are adjacent in ${\rm Gr}(C)$ only if $p+q\geq n$. When $p$ and $q$ are adjacent and $p+q>n$, we uniquely have
\begin{equation}\label{eq:graph_triple}
i=n-q,\ j=p+q-n,\ k=n-p
\end{equation}
satisfying Eq.~\eqref{eq:graph}.
\end{remark}

\begin{exam}For $n=5$,
let $C=\begin{pmatrix}
0&1&0&0\\
1&1&0&0\\
0&0&0&0\\
0&0&0&0\\
\end{pmatrix},
\ C'=\begin{pmatrix}
0&0&1&0\\
0&1&0&0\\
1&0&0&0\\
0&0&0&0\\
\end{pmatrix}\in\mathcal M_4$, then
$c_{12}c_{22}=1$, $c'_{22}=1$ but $c'_{12}=0$,  and $c'_{13}=1$ but $c'_{11}=0$,
so the graphs ${\rm Gr}(C)$ and ${\rm Gr}(C')$ are respectively
$$\raisebox{3em}{\small \xymatrix{1 \ar@{-}[r]& 4 \ar@{-}[d]\\
2\ar@{-}[r]& 3}},\qquad
\raisebox{3em}{\small \xymatrix{{\bf 1} \ar@{.}[r]& {\bf 4} \\
2\ar@{-}[r]& 3}}.$$
Note that ${\rm Gr}(C)$ is already connected and generic, while ${\rm Gr}(C')$ has two connected components, one of which containing vertices $1,4$ is contractible. Here we use numbers in bold and dotted lines to represent vertices and edges in a contractible connected component respectively.
\end{exam}

Now we state the main result of this section, an explicit characterization of the second (graded) commutative cohomology group of DGCAs.
\begin{theorem}\label{th:cocyle_graph}
For any $C\in\mathcal M_{n-1}$, the $2$-cocycle space $Z_g^2(\frakp(C),V)$ is a subvariety of the affine space $\mathbbm k^{n-1}$
of dimension not bigger than the number $u(C)$ of generic connected components of the graph ${\rm Gr}(C)$.
\end{theorem}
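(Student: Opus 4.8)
The plan is to analyze the defining equations of $Z_g^2(\frakp(C),V)$ recorded in Eq.~\eqref{eq:cocycle_theta}, namely $\theta_i=\theta_{n-i}$ together with $\theta_{i+j}c_{ij}=\theta_{j+k}c_{jk}$ whenever $i+j+k=n$, and to show that these relations force all coordinates $\theta_p$ with $p$ in a \emph{fixed} connected component of ${\rm Gr}(C)$ to be scalar multiples of one another, while the coordinates lying in a \emph{contractible} component are forced to vanish. Granting this, a point of $Z_g^2(\frakp(C),V)$ is determined by one free scalar for each generic connected component (with all remaining coordinates then pinned down linearly), which immediately bounds $\dim Z_g^2(\frakp(C),V)$ by $u(C)$.

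First I would make the link between the two kinds of edges in Definition~\ref{defn:graph} and the two kinds of relations in Eq.~\eqref{eq:cocycle_theta}. If $p+q=n$ then the symmetry relation $\theta_p=\theta_q$ ties the two coordinates together directly. If $p,q$ are adjacent with $p+q>n$, then by Remark~\ref{rem:graph_triple} there is a unique triple $i=n-q,\ j=p+q-n,\ k=n-p$ with $i+j+k=n$, $p=i+j$, $q=j+k$ and $c_{ij}c_{jk}\neq0$; the corresponding instance of the second relation in Eq.~\eqref{eq:cocycle_theta} reads $\theta_p c_{ij}=\theta_q c_{jk}$ with both coefficients nonzero, so $\theta_p$ and $\theta_q$ are each a (nonzero) scalar multiple of the other. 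Thus along every edge of ${\rm Gr}(C)$ the two endpoint coordinates are proportional with a nonzero ratio depending only on $C$. Traversing a path inside a connected component ${\rm Gr}^{[p]}(C)$ and composing these ratios shows that, on that component, the tuple $(\theta_q)_{q\in{\rm Gr}^{[p]}(C)}$ is determined by the single value $\theta_p$ via fixed nonzero scalars; in particular the component contributes at most one degree of freedom.

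Next I would handle contractibility. Suppose the component containing $p$ is contractible, so it contains a vertex of the form $i+j$ with $c_{ij}\neq0$ but $c_{j,\,n-i-j}=0$. Applying the relation $\theta_{i+j}c_{ij}=\theta_{j+k}c_{jk}$ with $k=n-i-j$ (so $c_{jk}=c_{j,\,n-i-j}=0$) gives $\theta_{i+j}c_{ij}=0$, hence $\theta_{i+j}=0$ since $c_{ij}\neq0$. By the proportionality along edges established above, vanishing at one vertex of a connected component propagates to vanishing at every vertex of that component, so $\theta_q=0$ for all $q\in{\rm Gr}^{[p]}(C)$. Therefore contractible components contribute nothing, and the number of genuine free parameters is at most the number $u(C)$ of generic components; choosing a spanning tree in each generic component and using it to express every coordinate linearly in terms of one chosen "root" coordinate realizes $Z_g^2(\frakp(C),V)$ as (a subvariety of) the image of a linear map from $\mathbbm k^{u(C)}$, whence $\dim Z_g^2(\frakp(C),V)\leq u(C)$.

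The one point that needs care — and the place I expect the only real subtlety — is consistency: within a generic component there may be several independent paths (cycles) between two vertices, and one must check that the various proportionality relations coming from Eq.~\eqref{eq:cocycle_theta} are mutually compatible rather than collapsing the parameter to $0$. This is exactly where the associativity constraint Eq.~\eqref{eq:ass_coef} on $C\in\mathcal M_{n-1}$ enters: it guarantees that the product of edge-ratios around any cycle equals $1$, so the linear system is consistent and the "root" coordinate is genuinely free in a generic component. (Note the statement only claims an upper bound $u(C)$ on the dimension, so for the bound itself the cycle-consistency is not even needed — it is automatic that imposing more relations can only decrease the dimension; consistency matters only if one later wants the bound to be sharp for generic components.) Writing $\theta_p$ along edges and invoking Eq.~\eqref{eq:ass_coef} to kill cycles completes the argument.
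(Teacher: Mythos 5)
Your argument is correct and follows essentially the same route as the paper's proof: along an edge with $p+q=n$ the symmetry relation gives $\theta_p=\theta_q$, along an edge with $p+q>n$ the relation $\theta_pc_{ij}=\theta_qc_{jk}$ (with both coefficients nonzero) gives proportionality, so each connected component carries at most one free parameter; a contractible component contains a vertex $i+j$ with $c_{ij}\neq0$, $c_{j,\,n-i-j}=0$, forcing $\theta_{i+j}=0$ and hence vanishing on the whole component, which yields $\dim Z_g^2(\frakp(C),V)\leq u(C)$.

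One caveat about your final paragraph: the parenthetical claim that the associativity constraint Eq.~\eqref{eq:ass_coef} ``guarantees that the product of edge-ratios around any cycle equals $1$'' is false in general. The paper's remark following the theorem, and the explicit $8$-dimensional example with the circuit on vertices $5,7,8$, show that a circuit in a generic component can impose an extra condition (there, $c_{14}c_{26}c_{43}=c_{44}c_{61}c_{32}$) whose failure forces all $2$-cocycles to vanish on that component; this is exactly why the theorem asserts only an upper bound rather than equality. You correctly observe that the bound itself does not need cycle-consistency, so your proof of the stated inequality stands, but the sharpness claim for generic components should be dropped (it holds, e.g., when the generic components are trees or when $C$ is a $(0,1)$-matrix, as the paper notes, but not for arbitrary $C\in\mathcal M_{n-1}$).
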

\begin{proof}
Fix any $2$-cocycle $\theta=(\theta_i)_{i\in [n-1]}\in Z_g^2(\frakp(C),V)$. According to Remark~\ref{rem:graph_triple}, we have $p+q\geq n$ for any two adjacent vertices $p$ and $q$ in ${\rm Gr}(C)$. If $p+q=n$, then $\theta_p=\theta_q$. If $p+q>n$, we have $c_{ij}c_{j,\,n-i-j}\neq0$ for $i=n-q$ and $j=p+q-n$ by Definition~\ref{defn:graph}, and also $c_{ij}\theta_p=c_{j,\,n-i-j}\theta_q$ by Eq.~\eqref{eq:cocycle_theta}.
In other words,
the values of $\theta$ on any connected component ${\rm Gr}^{[p]}(C)$ with $p\in [n-1]$ is uniquely determined by $\theta_p$.

From the discussion above, we know that if $\theta$ takes zero on one vertex, it must vanish on the entire connected component containing such a vertex.
Particularly, any contractible connected component of ${\rm Gr}(C)$ contains a vertex $p$ such that $i+j=p$, and $c_{ij}\neq0$ but $c_{j,\,n-i-j}=0$, then $\theta_p=\theta_{n-p}=0$
by Eq.~\eqref{eq:cocycle_theta} again. Hence, $\theta$ vanish on all contractible connected components.

As a result, $Z_g^2(\frakp(C),V)$ is a subvariety of $\mathbbm k^{n-1}$ isomorphic to $\mathbbm k^u$ with $u\leq u(C)$.
\end{proof}

\begin{remark}
(1) Following the proof of Theorem~\ref{th:cocyle_graph}, we see that the $2$-cocycle space $Z_g^2(\frakp(C),V)$ is exactly isomorphic to $\mathbbm k^{u(C)}$, when
all $u(C)$ generic connected components of the graph ${\rm Gr}(C)$ are circuit-free, namely they are trees.

(2) By contrast, a generic connected component of ${\rm Gr}(C)$ containing a circuit may force all $2$-cocycles vanishing on it, and $Z_g^2(\frakp(C),V)$ is of dimension $<u(C)$ in this situation. Such a phenomenon drives us to define a more subtle notion. That is, a generic connected component is called {\bf nonvanishing} if it admits nonzero values of 2-cocyles.

(3) For any $(0,1)$-matrix $C$ in $\mathcal M_{n-1}$ and $2$-cocycle $\theta\in Z_g^2(\frakp(C),V)$, $\theta$ is a constant function on every connected component of ${\rm Gr}(C)$, as all nonzero entries of $C$ are 1. In particular, any generic connected component of ${\rm Gr}(C)$ is {\it always} nonvanishing when $C$ is a $(0,1)$-matrix.
\end{remark}

\begin{exam}
Let $C=(c_{ij})_{i,j\in [10]}$ be any $10\times 10$ symmetric matrix with nonzero entries
$$c_{24}=c_{42},\ c_{33},\ c_{25}=c_{52},\ c_{34}=c_{43},\ c_{35}=c_{53},\ c_{44},\ c_{45}=c_{54},$$
then Eq.~\eqref{eq:ass_coef} trivially holds for such a matrix and so $C\in\mathcal M_{10}$. Since
$$c_{24}c_{25},\ c_{24}c_{45},\ c_{25}c_{45},\ c_{33}c_{35},\ c_{34}c_{44}\neq0,$$
the graph ${\rm Gr}(C)$ is given by
$$
\raisebox{3em}{\small \xymatrix{
1\ar@{-}[r] & 10 & 9\ar@{-}[r]& 2\\
5 \ar@{-}[r]& 6 \ar@{-}[r] \ar@{-}[ru] \ar@{-}[rd] & 7 \ar@{-}[r] \ar@{-}[u] \ar@{-}[d] & 4\\
&  & 8\ar@{-}[r] & 3}}.$$
Note that there is a circuit with vertices $6,7,8$. If $\theta=(\theta_i)_{i\in[10]}$ is a $2$-cocycle in $Z_g^2(\frakp(C),V)$, then Eq.~\eqref{eq:cocycle_theta} implies that
$$c_{24}\theta_6=c_{52}\theta_7,\ c_{34}\theta_7=c_{44}\theta_8,\ c_{35}\theta_8=c_{33}\theta_6.$$
Hence, whenever the following equality
$$c_{24}c_{34}c_{35}=c_{52}c_{44}c_{33}$$
fails, we have $\theta_6\theta_7\theta_8=0$, and then $\theta$ vanishes on the whole connected component containing this circuit. So even though ${\rm Gr}(C)$ has 2 generic connected components, the space $Z_g^2(\frakp(C),V)$ may only have dimension 1.

\end{exam}

Also, we introduce the following equivalence relation on the $2$-cocycle space of a DGCA.
\begin{defn}
Given any $C\in\mathcal M_{n-1}$, the equivalence relation $\sim$ on $\mathcal M_{n-1}$ induces an equivalence relation, also denoted by $\sim$, on $Z_g^2(\frakp(C),V)$. That is, for any $\theta,\gamma\in Z_g^2(\frakp(C),V)$, let
$$\theta \sim \gamma \ \Leftrightarrow \ C_\theta\sim C_\gamma,$$
where $C_\theta$ and $C_\gamma$ are respectively the coefficient matrices of $\frakp(C)_\theta$ and $\frakp(C)_\gamma$ as $n$-dimensional DGCAs.
Also, let $Z_\gamma$ be the equivalence class of $\gamma$ in $Z_g^2(\frakp(C),V)$ with respect to $\sim$.
\end{defn}
Namely, $\theta\in Z_\gamma$ if and only if $\theta,\gamma$ have the same distribution of nonzero entries.

\begin{coro}\label{cor:cocyle_graph}
For any $C\in\mathcal M_{n-1}$ and $\gamma\in Z_g^2(\frakp(C),V)$, the equivalence class $Z_\gamma$ is a locally closed subvariety of $\mathbbm k^{n-1}$ isomorphic to $(\mathbbm k^*)^{\times v(\gamma)}$, where
$v(\gamma)$ is the number of connected components of ${\rm Gr}(C)$ on which $\gamma$ takes nonzero values.
\end{coro}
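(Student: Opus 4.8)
The plan is to build directly on the structural analysis carried out in the proof of Theorem~\ref{th:cocyle_graph}. Recall the key facts established there: for $\theta=(\theta_i)_{i\in[n-1]}\in Z_g^2(\frakp(C),V)$ and any connected component $K$ of ${\rm Gr}(C)$, the relations \eqref{eq:cocycle_theta} determine the restriction $\theta|_K$ from the single value $\theta_p$ at any vertex $p\in K$: a $p+q=n$ edge gives $\theta_p=\theta_q$, while a $p+q>n$ edge gives, via Remark~\ref{rem:graph_triple} and \eqref{eq:cocycle_theta}, a relation $c_{ij}\theta_p=c_{j,\,n-i-j}\theta_q$ in which both coefficients are nonzero by \eqref{eq:graph}, so $\theta_q$ is a fixed nonzero scalar multiple of $\theta_p$. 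Hence $\theta$ is either identically zero or nowhere zero on $K$, and the support of any cocycle is a union of connected components of ${\rm Gr}(C)$.

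First I would record the direct sum decomposition $Z_g^2(\frakp(C),V)=\bigoplus_K W_K$, over all connected components $K$ of ${\rm Gr}(C)$, where $W_K$ consists of the cocycles supported on $K$. To see this is well defined one checks that setting all coordinates of a cocycle $\theta$ outside $K$ equal to zero again produces a cocycle: each relation in \eqref{eq:cocycle_theta} either relates two coordinates lying in a common component or forces a single coordinate to vanish, and in the latter case that coordinate already vanishes whenever it lies in $K$ (since $\theta$ itself is a cocycle). By the determination statement above, $\dim W_K\le 1$, and $\dim W_K=1$ precisely when $K$ is nonvanishing in the sense of the Remark following Theorem~\ref{th:cocyle_graph}. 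For the fixed $\gamma$, write $\gamma^{(K)}$ for the cocycle obtained by zeroing $\gamma$ outside $K$, and set $S=\{K : \gamma^{(K)}\neq 0\}$; then $|S|=v(\gamma)$, each $K\in S$ is nonvanishing, and $\gamma^{(K)}$ spans the line $W_K$ and vanishes nowhere on $K$.

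Next I would identify $Z_\gamma$ explicitly. Decomposing an arbitrary cocycle as $\theta=\sum_K\theta^{(K)}$ and using that a nonzero element of $W_K$ vanishes nowhere on $K$, the support of $\theta$ equals $\bigcup\{K : \theta^{(K)}\neq 0\}$. Therefore $\theta\in Z_\gamma$, i.e.\ $\theta$ has the same distribution of nonzero entries as $\gamma$, if and only if $\theta^{(K)}\neq 0$ for every $K\in S$ and $\theta^{(K)}=0$ for every $K\notin S$; equivalently $\theta=\sum_{K\in S}\lambda_K\gamma^{(K)}$ with every $\lambda_K\in\mathbbm k^*$. The map $(\mathbbm k^*)^{\times v(\gamma)}\to Z_\gamma$, $(\lambda_K)_{K\in S}\mapsto\sum_{K\in S}\lambda_K\gamma^{(K)}$, is then a bijective morphism whose inverse $\theta\mapsto(\theta_{p_K}/\gamma_{p_K})_{K\in S}$ (for chosen representatives $p_K\in K$) is again a morphism; hence $Z_\gamma\cong(\mathbbm k^*)^{\times v(\gamma)}$ as varieties. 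Finally, $Z_\gamma$ is cut out inside the closed subvariety $Z_g^2(\frakp(C),V)\subseteq\mathbbm k^{n-1}$ by the closed conditions $\theta_i=0$ for $i$ outside the support of $\gamma$ together with the open conditions $\theta_i\neq 0$ for $i$ in the support of $\gamma$, so it is locally closed in $\mathbbm k^{n-1}$.

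The only delicate point, which I expect to be the main (and fairly mild) obstacle, is the claim that $W_K$ is genuinely $1$-dimensional as soon as $\gamma$ is nonzero on $K$ — in particular that a circuit in ${\rm Gr}(C)$ does not obstruct scaling a nonzero solution. This needs no separate computation: the relations \eqref{eq:cocycle_theta} are linear and homogeneous, so the solution set over the vertices of $K$ is a linear subspace, nonzero because it contains $\gamma^{(K)}$ and at most $1$-dimensional by the ``determined by $\theta_{p_K}$'' argument; the existence of $\gamma$ supplies the required consistency around circuits for free. Everything else is bookkeeping with the connected components of ${\rm Gr}(C)$ (the case $\gamma=0$, $v(\gamma)=0$, being the trivial one where $Z_\gamma=\{0\}$ is a point).
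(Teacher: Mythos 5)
Your proposal is correct and follows essentially the same route as the paper: both rest on the componentwise determination of cocycles established in the proof of Theorem~\ref{th:cocyle_graph} (values propagate along edges by nonzero scalars, vanish on contractible components) and then identify $Z_\gamma$ by matching zero-patterns with $\gamma$. Your explicit decomposition $Z_g^2(\frakp(C),V)=\bigoplus_K W_K$ together with the linearity/homogeneity scaling argument makes rigorous the surjectivity onto $(\mathbbm k^*)^{\times v(\gamma)}$, a point the paper's very brief proof (which only chooses one vertex per generic component and invokes the ``uniquely determined'' property) leaves implicit.
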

\begin{proof}
First arbitrarily choose one vertex from each generic connected component of ${\rm Gr}(C)$ to form a $u(C)$-subset $X$ of $[n-1]$.
According to Theorem~\ref{th:cocyle_graph},
any $2$-cocycle $\theta\in Z_g^2(\frakp(C),V)$ can be uniquely determined by any given values $\theta_p\ (p\in X)$, while other $\theta_q$ must vanish whenever ${\rm Gr}^{[q]}(C)$ is contractible. Especially if $\theta\in Z_\gamma$, then we also have $\theta_p\neq0$ for exactly $v(\gamma)$ vertices $p\in X$ such that $\gamma_p\neq0$, hence prove the result.
\end{proof}

Next we give a necessary condition when the central extensions arisen from one equivalence class $Z_\gamma$ in $Z_g^2(\frakp(C),V)$ only give a unique isomorphism class of DGCAs.
\begin{prop}\label{prop:orbit}
For any matrix $C\in\mathcal M_{n-1}$ and nonzero $2$-cocycle $\gamma\in Z_g^2(\frakp(C),V)$, if
the central extensions
$\frakp(C)_\theta$ with $\theta\in Z_\gamma$ form a unique isomorphism class,
then the dimension of the $\Aut(\frakp(C))$-orbit $O_{[\gamma]}$ of the $1$-dimensional subspace $[\gamma]$, or equivalently
the number of non-redundant equalities defining ${\rm Stab}_{\Aut(\frakp(C))}[\gamma]$ in Eq.~\eqref{eq:aut_stab}, is exactly $v(\gamma)-1$.
\end{prop}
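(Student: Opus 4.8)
The plan is to reformulate the hypothesis as an equality of two subvarieties of the projective space of lines in $Z_g^2(\frakp(C),V)$ and then compare their dimensions; since only necessity of the condition is asserted, it suffices to show that a single isomorphism class forces $\dim O_{[\gamma]}=v(\gamma)-1$. Note first that $Z_g^2(\frakp(C),V)$ is a \emph{linear} subspace of $\mathbbm k^{n-1}$, being cut out by the linear relations \eqref{eq:cocycle_theta}, so the Grassmannian of its $1$-dimensional subspaces is an ordinary projective space on which $G:=\Aut(\frakp(C))$ acts linearly via \eqref{eq:cocycle_action}. Write $\Pi_\gamma:=\{[\theta]\mid\theta\in Z_\gamma\}$ for the image of $Z_\gamma$ there; as $\gamma\neq0$, every $\theta\in Z_\gamma$ is nonzero, so $\Pi_\gamma$ is well defined.

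By the ``in particular'' part of Theorem~\ref{th:isom}, for $\theta\in Z_\gamma$ one has $\frakp(C)_\theta\cong\frakp(C)_\gamma$ if and only if $[\theta]\in O_{[\gamma]}$; since $\gamma\in Z_\gamma$, the extensions $\frakp(C)_\theta$, $\theta\in Z_\gamma$, form a single isomorphism class if and only if $\Pi_\gamma\subseteq O_{[\gamma]}$. The reverse inclusion $O_{[\gamma]}\subseteq\Pi_\gamma$ always holds: for $\sigma=(b_i)_{i\in[n-1]}\in G$, Eqs.~\eqref{eq:cocycle} and \eqref{eq:cocycle_action} yield $(\gamma_\sigma)_i=b_ib_{n-i}\gamma_i$ with $b_ib_{n-i}\in\mathbbm k^*$, so $\gamma_\sigma$ has the same support as $\gamma$, whence $\gamma_\sigma\in Z_\gamma$ and $O_{[\gamma]}=\{[\gamma_\sigma]\mid\sigma\in G\}\subseteq\Pi_\gamma$. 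Hence, if the extensions form one isomorphism class, then $\Pi_\gamma=O_{[\gamma]}$.

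Now I would compute both dimensions. By Corollary~\ref{cor:cocyle_graph}, $Z_\gamma\cong(\mathbbm k^*)^{\times v(\gamma)}$ is irreducible and stable under the free scaling action of $\mathbbm k^*$, so the induced surjection $Z_\gamma\to\Pi_\gamma$ has $1$-dimensional fibres and $\Pi_\gamma$ is irreducible of dimension $v(\gamma)-1$. On the other hand $O_{[\gamma]}$ is a smooth locally closed subvariety with $\dim O_{[\gamma]}=\dim G-\dim{\rm Stab}_G[\gamma]$ by the orbit--stabilizer theorem. Thus $\Pi_\gamma=O_{[\gamma]}$ forces $\dim O_{[\gamma]}=v(\gamma)-1$, the first stated form of the condition. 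For the equivalent reformulation, Proposition~\ref{prop:aut_stab} (with $\theta=\gamma$) presents ${\rm Stab}_G[\gamma]$ as the subgroup of the diagonalizable group $G$ cut out by the monomial equalities $b_ib_{n-i}=b_jb_{n-j}$ ($\gamma_i\gamma_j\neq0$) of \eqref{eq:aut_stab}; the number $r$ of non-redundant ones among these is by definition the codimension $\dim G-\dim{\rm Stab}_G[\gamma]=\dim O_{[\gamma]}$, so ``$\dim O_{[\gamma]}=v(\gamma)-1$'' and ``$r=v(\gamma)-1$'' say the same thing.

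The step requiring most care is the last one --- making the notion of ``non-redundant equalities'' precise and matching it with $\dim G-\dim{\rm Stab}_G[\gamma]$ for monomial conditions on a diagonalizable group. In particular, using the defining relations $b_{i+j}=b_ib_j$ of $G$ (valid whenever $c_{ij}\neq0$) applied to the triples of \eqref{eq:graph}, one checks that $b_ib_{n-i}$ is already constant along each connected component of ${\rm Gr}(C)$ for every $\sigma\in G$; hence the equalities of \eqref{eq:aut_stab} only chain together the $v(\gamma)$ components on which $\gamma$ is nonzero, which reconfirms $r\le v(\gamma)-1$ unconditionally (matching $O_{[\gamma]}\subseteq\Pi_\gamma$). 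The remaining inputs --- Theorem~\ref{th:isom}, Corollary~\ref{cor:cocyle_graph}, and the orbit--stabilizer theorem for algebraic group actions --- are used as black boxes.
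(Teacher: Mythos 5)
Your proposal is correct and takes essentially the same route as the paper: both reduce the question via Theorem~\ref{th:isom} to comparing the $\Aut(\frakp(C))$-orbit of $[\gamma]$ with the (projectivized) equivalence class $Z_\gamma$, use Corollary~\ref{cor:cocyle_graph} to get $\dim Z_\gamma=v(\gamma)$, the orbit--stabilizer dimension formula for $\dim O_{[\gamma]}$, and the same observation that $b_ib_{n-i}$ is constant on connected components of ${\rm Gr}(C)$. The only difference is presentational --- you argue directly (one class forces $\Pi_\gamma=O_{[\gamma]}$, hence equal dimensions) where the paper argues by contraposition --- which does not change the substance.
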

\begin{proof}
By \cite[Prop.~1.11]{Br} we know that the orbit $O_{[\gamma]}$ is a locally closed subvariety of the Grassmannian of $1$-dimensional subspaces of $Z_g^2(\frakp(C),V)$ and
$$\dim \mathcal O_{[\gamma]}=\dim \Aut(\frakp(C)) - \dim{\rm Stab}_{\Aut(\frakp(C))}[\gamma],$$
which is consequently the number of non-redundant equalities defining ${\rm Stab}_{\Aut(\frakp(C))}[\gamma]$ in Eq.~\eqref{eq:aut_stab}, under the premise of those defining $\Aut(\frakp(C))$ in Eq.~\eqref{eq:aut}.

On the other hand,  since $Z_\gamma$ contains the $\Aut(\frakp(C))$-orbit $\mathcal O_\gamma$ in $Z_g^2(\frakp(C),V)$, we clearly have $\dim \mathcal O_{[\gamma]}<\dim Z_\gamma=v(\gamma)$ by Corollary~\ref{cor:cocyle_graph}. Such an inequality can also be seen as follows. For any $\sigma=(b_i)_{i\in [n-1]}\in \Aut(\frakp(C))$, we claim that
$b_pb_{n-p}=b_qb_{n-q}$
if vertices $p$ and $q$ are connected in ${\rm Gr}(C)$. In fact, by the connectedness we only need to check it
whenever $p$ and $q$ are adjacent. Now it is trivially true if $p+q=n$. Otherwise, by Definition~\ref{defn:graph} there exist $i,j,k\geq 1$ such that
$$i+j+k=n,\ p=i+j,\ q=j+k,\ c_{ij}c_{jk}\neq 0,$$
and then
\begin{align*}
b_pb_{n-p}&=b_{i+j}b_{n-i-j}
\stackrel{\eqref{eq:aut}}{=}b_ib_jb_k,\\
b_qb_{n-q}&=b_{j+k}b_{n-j-k}
\stackrel{\eqref{eq:aut}}{=}b_ib_jb_k,
\end{align*}
so $b_pb_{n-p}=b_qb_{n-q}$.
As a result, the number of non-redundant equalities defining ${\rm Stab}_{\Aut(\frakp(C))}[\gamma]$ in Eq.~\eqref{eq:aut_stab} is less than the number $v(\gamma)$ of connected components of ${\rm Gr}(C)$ on which $\gamma$ does not vanish.

According to Theorem~\ref{th:isom},
the isomorphism classes of central extensions of $\frakp(C)$ are in bijection with the $\Aut(\frakp(C))$-orbits in the Grassmannian of $1$-dimensional subspaces of $Z_g^2(\frakp(C),V)$. If $\dim \mathcal O_{[\gamma]}<v(\gamma)-1$, then $Z_\gamma$ contains other $\Aut(\frakp(C))$-orbits apart from $\mathcal O_{[\gamma]}$,
and the central extensions $\frakp(C)_\theta$ with $\theta\in Z_\gamma$, i.e. $C_\theta\sim C_\gamma$ provide more than one isomorphism class.
\end{proof}

Next we use the combinatorial tool just developed to explain the phenomenon appearing in Example~\ref{ex:counter}.
\begin{exam}\label{ex:counter'}
Let $C=(c_{ij})_{i,j\in [7]}$ be the symmetric matrix in $\mathcal M_7$ with
$$c_{24}=c_{42}=c_{33}=c_{25}=c_{52}=c_{34}=c_{43}=1$$
and zeros elsewhere. Let $\gamma=(\gamma_i)_{i\in[7]}$ be the $2$-cocycle in $Z_g^2(\frakp(C),V)$ with
$$\gamma_1=\gamma_2=\gamma_6=\gamma_7=0,\quad \gamma_3=\gamma_4=\gamma_5=1.$$
By Proposition~\ref{prop:aut},
$$\Aut(\frakp(C))=\left\{(b_i)_{i\in [7]}\in(\mathbbm k^*)^{\times 7}\,\big|\, b_6=b_2b_4=b_3^2,\ b_7=b_2b_5=b_3b_4\right\}.$$
For any $\sigma=(b_i)_{i\in [7]}\in \Aut(\frakp(C))$, we have
$$b_3b_4^2=b_2b_4b_5=b_3^2b_5,\quad\mbox{i.e.}\quad b_4^2=b_3b_5.$$
Using Eq.~\eqref{eq:aut_stab}, we know that $\sigma\in {\rm Stab}_{\Aut(\frakp(C))}[\theta]$ for any $\theta\in Z_\gamma$, and ${\rm Stab}_{\Aut(\frakp(C))}[\theta]=\Aut(\frakp(C))$.
Hence, the $\Aut(\frakp(C))$-orbit of $[\theta]$ is the singleton $\{[\theta]\}$ (of dimension 0). On the other hand, the graph ${\rm Gr}(C)$ is given by
$$
\raisebox{3em}{\small \xymatrix{{\bf 1} \ar@{.}[r]& {\bf 7} & {\bf 2} \ar@{.}[r]& {\bf 6}\\
3\ar@{-}[r]& 5 & 4 &}},$$
and Corollary~\ref{cor:cocyle_graph} implies that
$$Z_\gamma=\left\{\theta=(\theta_i)_{i\in [7]}\in \mathbbm k^7\,\big|\,
\theta_1=\theta_2=\theta_6=\theta_7=0,\ \theta_3=\theta_5\neq0,\ \theta_4\neq0\right\}$$
of dimension $v(\gamma)=2$, and the Grassmannian of $1$-dimensional subspaces of $Z_\gamma$ has dimension 1. According to Proposition~\ref{prop:orbit},
the central extensions $\frakp(C)_\theta$ with $\theta\in Z_\gamma$, i.e. $C_\theta\sim C_\gamma$ provide more than one isomorphism class of DGCAs of dimension 8,
and even infinitely many when $\mathbbm k$ is infinite.
\end{exam}

With evidences of small dimension, we also try to prove that any DGCA has nontrivial central extensions,
but still lack of proper methods. Maybe the general theory of binomial algebras is helpful for this problem~\cite{ES}.
\begin{conjecture}
For any  $C\in\mathcal M_{n-1}$, ${\rm Gr}(C)$ must have at least one nonvanishing connected component,
so the DGCA $\frakp(C)$ has nontrivial central extensions by Theorem~\ref{th:cocyle_graph}.
\end{conjecture}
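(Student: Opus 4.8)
\section*{Proof proposal}

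The plan is to prove the equivalent statement that $Z_g^2(\frakp(C),V)\neq 0$ for every $C\in\mathcal M_{n-1}$: by Theorem~\ref{th:cocyle_graph} and the discussion around it, a connected component of ${\rm Gr}(C)$ is nonvanishing exactly when some $2$-cocycle has a nonzero value on one of its vertices, and a nonzero $2$-cocycle has such a value on whichever component it does not vanish on; so it suffices to produce a nontrivial $(\theta_i)_{i\in[n-1]}\in\mathbbm k^{n-1}$ with $\theta_i=\theta_{n-i}$ and $c_{ij}\theta_{i+j}=c_{jk}\theta_{j+k}$ for all $i,j,k\geq 1$ with $i+j+k=n$ (equivalently, satisfying Eq.~\eqref{eq:cocycle_theta}). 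Throughout set $g:=\max J_C$, which exists because $1\in J_C$.

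First I would dispose of the case $n-g\in J_C$, which already contains $g=n-1$ and the case $n=2g$ (where $n-g=g$). Here the tuple defined by $\theta_g=\theta_{n-g}=1$ and $\theta_i=0$ otherwise is a nonzero $2$-cocycle: in every equation $c_{ij}\theta_{i+j}=c_{jk}\theta_{j+k}$ with $i+j+k=n$, if $i+j\in\{g,n-g\}$ then $c_{ij}=0$ since $g,n-g\in J_C$, and likewise if $j+k\in\{g,n-g\}$, so both sides vanish identically, while $\theta_i=\theta_{n-i}$ is clear. The witnessing nonvanishing component is then ${\rm Gr}^{[g]}(C)=\{g,n-g\}$, since no vertex in $J_C$ carries a type-II edge of the form Eq.~\eqref{eq:graph}.

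For the remaining case $n-g\notin J_C$ (so $g<n-1$, and $p_{n-g}$ is a nonconstant polynomial in the lower $p_i$'s), the strategy is to show that among the connected components of ${\rm Gr}(C)$ there is one, naturally ${\rm Gr}^{[g]}(C)$, that is generic and circuit-balanced, hence nonvanishing: fixing the value $1$ at $g$, transporting it along the component and extending by $0$ yields a nonzero $2$-cocycle, the genericity being precisely what makes the cocycle equations that straddle the component hold automatically, exactly as in the proof of Theorem~\ref{th:cocyle_graph}. Since $g\in J_C$, the vertex $g$ is pendant in ${\rm Gr}^{[g]}(C)$, joined only to $n-g$; one then walks out from $n-g$, at each step using the associativity identities Eq.~\eqref{eq:ass_coef} — observe that all identities relating indices that sum to exactly $n$ are vacuous, so one must exploit those with smaller index sums — to force the multiplicative compatibilities among the relevant $c_{ij}$ that preclude both (a) an edge $p=a+b$ of the component with $c_{ab}\neq 0$ but $c_{b,\,n-p}=0$, and (b) a circuit around which the product of the edge-ratios $c_{ij}/c_{jk}$ differs from $1$. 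In all DGCAs of dimension $\leq 7$, and in Examples~\ref{ex:counter} and~\ref{ex:counter'} read in dimension $n$, Eq.~\eqref{eq:ass_coef} does exactly this, and any unbalanced circuit that does occur sits in a component other than that of $\max J_C$.

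The real obstacle is step (b): Eq.~\eqref{eq:ass_coef} by itself does permit unbalanced circuits in ${\rm Gr}(C)$ (Example~\ref{ex:counter'}), so one must show such circuits avoid the component of $\max J_C$, and one should also make precise which component to use when ${\rm Gr}^{[g]}(C)$ degenerates to $\{g,n-g\}$ and is contractible. A cleaner but still non-trivial alternative route is cohomological: presenting the unitalisation $\hat\frakp(C):=\mathbbm k\oplus\frakp(C)$ as $R/I$ with $R=\mathbbm k[x_1,\dots,x_r]$ on its minimal generators (all of degree $<n$), the central extensions of $\frakp(C)$ by $V$ in degree $n$ are exactly the square-zero extensions of $\hat\frakp(C)$ by the residue field placed in degree $n$; since all $r$ generators of $R$ have degree $<n$, every graded $\mathbbm k$-derivation of $R$ with values in that module vanishes, so $Z_g^2(\frakp(C),V)$ is $\mathbbm k$-dual to the degree-$n$ part of $I/(x_1,\dots,x_r)I$, i.e. to the space of minimal generators of $I$ in degree $n$. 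Thus the conjecture becomes: the defining ideal of any graded Artinian algebra with Hilbert function $(1,1,\dots,1)$ ($n$ entries) has a minimal generator in degree $n$, i.e. $\beta_{1,n}\neq 0$. This cannot be read off the Hilbert series alone because of cancellation in the alternating Betti-number sum; a proof would seem to require controlling the higher syzygies of such algebras — for instance via self-duality of the minimal free resolution when $\hat\frakp(C)$ is Gorenstein, together with a separate argument otherwise — and this is where I expect the genuine difficulty to lie.
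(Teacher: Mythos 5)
This statement is stated in the paper as a \emph{conjecture}: the authors explicitly say they ``try to prove that any DGCA has nontrivial central extensions, but still lack of proper methods,'' so there is no proof in the paper to compare against, and your proposal should be judged on whether it closes the question. It does not, and you say so yourself. What you do establish is a correct partial result: writing $g=\max J_C$, in the case $n-g\in J_C$ the tuple with $\theta_g=\theta_{n-g}=1$ and all other $\theta_i=0$ does satisfy Eq.~\eqref{eq:cocycle_theta}, because every term $c_{ij}\theta_{i+j}$ with $i+j+k=n$ either has $\theta_{i+j}=0$ or has $i+j\in\{g,n-g\}\subseteq J_C$ and hence $c_{ij}=0$; and the component $\{g,n-g\}$ is generic for the same reason. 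This covers $g=n-1$ and $n=2g$ and is genuine progress.

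The gap is the entire remaining case $n-g\notin J_C$, and it is exactly the heart of the conjecture. Two things can go wrong with your chosen witness ${\rm Gr}^{[g]}(C)$ and you rule out neither: (a) the component can be contractible --- if $c_{ab}\neq0$ with $a+b=n-g$ but $c_{b,g}=0$, then every $2$-cocycle vanishes at $n-g$ and hence at $g$; and (b) it can contain an unbalanced circuit, since for $C\in\mathcal M_{n-1}$ all associativity identities \eqref{eq:ass_coef} with $i+j+k\geq n$ are vacuous (the relevant entries are zero by the truncation convention), so associativity imposes \emph{no} multiplicative relations among the products $c_{ij}c_{jk}$ that weight the circuit edges --- this is precisely the mechanism of the paper's Example~\ref{ex:counter'} and of the remark after Theorem~\ref{th:cocyle_graph}. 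Your sketch of ``walking out from $n-g$ using Eq.~\eqref{eq:ass_coef}'' therefore cannot force balance by itself, and you give no argument that when ${\rm Gr}^{[g]}(C)$ fails, some \emph{other} component must be nonvanishing, which is what the conjecture asserts. The cohomological reformulation (graded central extensions in degree $n$ correspond to minimal generators of the defining ideal of $\hat\frakp(C)$ in degree $n$, so the claim becomes $\beta_{1,n}\neq0$ for graded Artinian algebras with Hilbert function $(1,1,\dots,1)$) is an attractive translation and worth developing, but as you note it is itself unproven and of comparable difficulty, so the proposal as it stands does not prove the statement.
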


In the end of this section, we give a criterion when all nontrivial central extensions of a DGCA form exactly one isomorphism class. In order to prove it,
we need a technical lemma.
\begin{lemma}\label{lem:cocyle_colinear}
For any matrix $C\in\mathcal M_{n-1}$, let $G$ be one connected component of ${\rm Gr}(C)$, and $\theta,\gamma$ two $2$-cocycles in $Z_g^2(\frakp(C),V)$.
Then $\theta$ and $\gamma$ are colinear on $G$, namely
$$\theta_p\gamma_q=\theta_q\gamma_p,\quad\forall p,q \in G.$$
\end{lemma}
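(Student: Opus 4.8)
The plan is to show that once $\theta$ and $\gamma$ agree up to scalar at one vertex of $G$, they agree up to the same scalar at every vertex, by propagating the relation along edges. First I would handle the trivial case: if $\theta$ vanishes identically on $G$, then by the argument in the proof of Theorem~\ref{th:cocyle_graph} (a $2$-cocycle vanishing at one vertex vanishes on the whole component), $\theta_p=0$ for all $p\in G$, and the asserted equality $\theta_p\gamma_q=\theta_q\gamma_p$ holds trivially; symmetrically if $\gamma$ vanishes on $G$. So I may assume both $\theta$ and $\gamma$ are nonzero on $G$, hence (again by Theorem~\ref{th:cocyle_graph}) nowhere zero on $G$.

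Now fix any vertex $p_0\in G$ and set $\lambda=\theta_{p_0}/\gamma_{p_0}\in\mathbbm k^*$. I claim $\theta_p=\lambda\gamma_p$ for every $p\in G$. By connectedness of $G$ it suffices to prove that if $p,q\in G$ are adjacent and $\theta_p=\lambda\gamma_p$, then $\theta_q=\lambda\gamma_q$. By Remark~\ref{rem:graph_triple} an edge forces $p+q\ge n$. If $p+q=n$, then Eq.~\eqref{eq:cocycle_theta} gives $\theta_p=\theta_q$ and $\gamma_p=\gamma_q$, so $\theta_q=\theta_p=\lambda\gamma_p=\lambda\gamma_q$. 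If $p+q>n$, then by Definition~\ref{defn:graph} and Remark~\ref{rem:graph_triple} there are uniquely determined $i=n-q$, $j=p+q-n$, $k=n-p$ with $i+j+k=n$, $p=i+j$, $q=j+k$, and $c_{ij}c_{jk}\ne 0$; in particular $c_{ij}\ne 0$. Eq.~\eqref{eq:cocycle_theta} applied to both cocycles yields $c_{ij}\theta_p=c_{jk}\theta_q$ and $c_{ij}\gamma_p=c_{jk}\gamma_q$ (writing $c_{j,\,n-i-j}=c_{jk}$). Multiplying the first by $\gamma_q$ and the second by $\theta_q$ and subtracting gives $c_{ij}(\theta_p\gamma_q-\gamma_p\theta_q)=0$, so $\theta_p\gamma_q=\gamma_p\theta_q$ since $c_{ij}\ne 0$; substituting $\theta_p=\lambda\gamma_p$ and using $\gamma_p\ne 0$ gives $\theta_q=\lambda\gamma_q$. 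This completes the induction, so $\theta_p=\lambda\gamma_p$ on all of $G$, whence $\theta_p\gamma_q=\lambda\gamma_p\gamma_q=\theta_q\gamma_p$ for all $p,q\in G$, as desired.

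I do not expect a serious obstacle here; the only point requiring a little care is organizing the two degenerate cases ($\theta$ or $\gamma$ zero on $G$) cleanly before invoking the nowhere-vanishing consequence of Theorem~\ref{th:cocyle_graph}, and making sure the edge relation $c_{ij}\ne 0$ is actually available (it is, since an edge with $p+q>n$ requires $c_{ij}c_{jk}\ne 0$). The propagation step itself is just the elementary observation that the relations in Eq.~\eqref{eq:cocycle_theta} are linear and identical in form for $\theta$ and $\gamma$, so colinearity at one vertex is preserved along every edge.
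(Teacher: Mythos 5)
Your proof is correct and takes essentially the same route as the paper's: the heart of both arguments is the edge relation $c_{ij}\theta_p=c_{jk}\theta_q$, $c_{ij}\gamma_p=c_{jk}\gamma_q$ extracted from Eq.~\eqref{eq:cocycle_theta} (plus the trivial case $p+q=n$), propagated through $G$ by connectedness. Your additional bookkeeping---first reducing to cocycles that are nowhere zero on $G$ via the vanishing-propagation observation from the proof of Theorem~\ref{th:cocyle_graph}, then transporting a fixed scalar $\lambda$ along edges---actually makes the paper's one-line appeal to connectedness airtight, since the pairwise relation $\theta_p\gamma_q=\theta_q\gamma_p$ is not by itself transitive through vertices where both cocycles vanish.
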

\begin{proof}
When $G$ is contractible, any $2$-cocycle vanishes on $G$, so it is trivially true.

Otherwise, if $G$ is generic,
by the connectedness of $G$ it is sufficient to check that
$\theta_p\gamma_q=\theta_q\gamma_p$,
whenever $p$ and $q$ are adjacent in $G$. It is automatically true if $p+q=n$. Otherwise, by Definition~\ref{defn:graph} there exist $i,j,k\geq 1$ such that
$$i+j+k=n,\ p=i+j,\ q=j+k,\ c_{ij}c_{jk}\neq 0,$$
then
$$\theta_p\gamma_q=\theta_{i+j}\gamma_{j+k}\quad\mbox{and}\quad
\theta_q\gamma_p=\theta_{j+k}\gamma_{i+j}$$
are equal to each other as $c_{ij}\theta_{i+j}c_{jk}\gamma_{j+k}=c_{jk}\theta_{j+k}c_{ij}\gamma_{i+j}$ by Eq.~\eqref{eq:cocycle_theta}.
\end{proof}

\begin{theorem}\label{th:orbit}
For any matrix $C\in\mathcal M_{n-1}$, all nontrivial central extensions of $\frakp(C)$ form exactly one isomorphism class of DGCAs if and only if ${\rm Gr}(C)$ has exactly one nonvanishing connected component.
\end{theorem}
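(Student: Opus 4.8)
The plan is to prove the two implications separately, using Theorem~\ref{th:isom} to translate the statement about isomorphism classes into a statement about $\Aut(\frakp(C))$-orbits in the Grassmannian of $1$-dimensional subspaces of $Z_g^2(\frakp(C),V)$. First I would dispose of the trivial case: if ${\rm Gr}(C)$ has no nonvanishing connected component, then by Theorem~\ref{th:cocyle_graph} (and the refined notion in the remark) every $2$-cocycle vanishes, so $\frakp(C)$ has no nontrivial central extension, and the statement is vacuous on both sides; thus we may assume there is at least one nonvanishing component.

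For the ``if'' direction, suppose ${\rm Gr}(C)$ has exactly one nonvanishing connected component $G$. Let $\theta,\gamma\in Z_g^2(\frakp(C),V)$ be two nonzero $2$-cocycles. Both must be supported on $G$ (they vanish on every other component, which is either contractible or circuit-forced-vanishing), and by Lemma~\ref{lem:cocyle_colinear} applied to $G$ we get $\theta_p\gamma_q=\theta_q\gamma_p$ for all $p,q\in G$, hence $\theta$ and $\gamma$ are colinear as vectors in $\mathbbm k^{n-1}$, i.e. $[\theta]=[\gamma]$. Therefore the Grassmannian of $1$-dimensional subspaces of $Z_g^2(\frakp(C),V)$ is a single point, trivially one $\Aut(\frakp(C))$-orbit, and by Theorem~\ref{th:isom} all nontrivial central extensions of $\frakp(C)$ are mutually isomorphic, giving exactly one isomorphism class.

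For the ``only if'' direction, I argue by contrapositive: suppose ${\rm Gr}(C)$ has at least two nonvanishing connected components $G_1,G_2$. Pick vertices $p_1\in G_1$, $p_2\in G_2$ and, using Theorem~\ref{th:cocyle_graph} together with the fact that each $G_\ell$ is nonvanishing, construct $2$-cocycles supported exactly on $G_1\cup G_2$ with prescribed nonzero values at $p_1$ and $p_2$; concretely, take $\gamma$ with $\gamma_{p_1}=\gamma_{p_2}=1$ (extended uniquely over $G_1\cup G_2$, zero elsewhere) and $\theta$ with $\theta_{p_1}=1$, $\theta_{p_2}=\lambda$ for a parameter $\lambda\in\mathbbm k^*$, $\lambda\neq 1$ — this is possible because the values on $G_1$ and $G_2$ are independent. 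Then $[\theta]$ and $[\gamma]$ are distinct points of the Grassmannian, both lying in $Z_\gamma$. It remains to show they lie in different $\Aut(\frakp(C))$-orbits for a suitable choice of $\lambda$. By Theorem~\ref{th:isom} this is exactly what fails the ``one isomorphism class'' conclusion; the cleanest route is the orbit-dimension count from Proposition~\ref{prop:orbit}: here $v(\gamma)=2$, so $\dim Z_\gamma=2$ and the Grassmannian of lines in $Z_\gamma$ is $1$-dimensional, while $\dim \mathcal O_{[\gamma]}\le \dim Z_\gamma-1=1$ with equality forced only under the non-redundancy condition of Proposition~\ref{prop:orbit}; showing that condition cannot hold when there are two \emph{independent} nonvanishing components — because any $\sigma=(b_i)\in\Aut(\frakp(C))$ satisfies $b_pb_{n-p}=b_qb_{n-q}$ whenever $p,q$ lie in the same connected component (as established in the proof of Proposition~\ref{prop:orbit}), so the stabilizer equation $b_{p_1}b_{n-p_1}=b_{p_2}b_{n-p_2}$ linking the two components is a genuine extra constraint, hence $\dim\mathcal O_{[\gamma]}\le 0<1$ — yields that $Z_\gamma$ (equivalently the line Grassmannian, which it surjects onto) contains infinitely many $\Aut(\frakp(C))$-orbits when $\mathbbm k$ is infinite, and in any case more than one, so the central extensions $\frakp(C)_\theta$ with $\theta\in Z_\gamma$ form more than one isomorphism class of DGCAs.

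The main obstacle I anticipate is the orbit-separation step in the ``only if'' direction: one must be careful that the two chosen nonvanishing components really do give independent degrees of freedom in $Z_g^2(\frakp(C),V)$ (Theorem~\ref{th:cocyle_graph} guarantees this, since $Z_g^2(\frakp(C),V)\cong\mathbbm k^u$ with the nonvanishing components furnishing independent coordinates), and that the stabilizer description in Eq.~\eqref{eq:aut_stab} combined with the ``same component $\Rightarrow$ $b_pb_{n-p}=b_qb_{n-q}$'' observation genuinely cuts the orbit dimension below $v(\gamma)-1$. Once that dimension inequality is in hand, the conclusion follows from Proposition~\ref{prop:orbit} (or directly from Theorem~\ref{th:isom}), and the remaining bookkeeping — matching the ``nontrivial central extension'' hypothesis to ``nonzero $2$-cocycle'', and checking that the two directions together cover the vacuous no-nonvanishing-component case — is routine.
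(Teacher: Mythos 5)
Your ``if'' direction is correct and coincides with the paper's: any two nonzero $2$-cocycles are supported exactly on the unique nonvanishing component, Lemma~\ref{lem:cocyle_colinear} makes them colinear, and Theorem~\ref{th:isom} concludes. Your handling of the vacuous case is also fine.

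The ``only if'' direction, however, has a genuine gap at the orbit-dimension step. You assert that, since $b_pb_{n-p}=b_qb_{n-q}$ holds automatically for $p,q$ in the same connected component, the cross-component equation $b_{p_1}b_{n-p_1}=b_{p_2}b_{n-p_2}$ cutting out ${\rm Stab}_{\Aut(\frakp(C))}[\gamma]$ is ``a genuine extra constraint, hence $\dim\mathcal O_{[\gamma]}\le 0$''. This is wrong on two counts. First, the inference is inverted: if that equation really were non-redundant, the stabilizer would have codimension $1$ in $\Aut(\frakp(C))$, so $\dim\mathcal O_{[\gamma]}=1=v(\gamma)-1$, which is exactly the case that Proposition~\ref{prop:orbit} (stated only as a \emph{necessary} condition) cannot rule out; your dimension count then gives no separation of $[\theta]$ from $[\gamma]$ inside $Z_\gamma$. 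Second, the cross-component equation need not be a genuine constraint at all: in Example~\ref{ex:counter'} the relation $b_3b_5=b_4^2$ linking the two nonvanishing components $\{3,5\}$ and $\{4\}$ follows from the defining relations of $\Aut(\frakp(C))$, so the stabilizer is the whole automorphism group; it is precisely when the equation is redundant that the orbit is $0$-dimensional. So in one case your argument proves nothing, and in the other its premise fails.

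The repair is the simpler route the paper takes: every $\sigma\in\Aut(\frakp(C))$ is diagonal, so $\theta_\sigma$ has the same distribution of nonzero entries as $\theta$, and colinear nonzero cocycles share their support; hence by Theorem~\ref{th:isom} isomorphic nontrivial extensions can only come from cocycles with the same support. If ${\rm Gr}(C)$ had two nonvanishing components $G_1,G_2$, then by the componentwise independence underlying Theorem~\ref{th:cocyle_graph} and Corollary~\ref{cor:cocyle_graph} you can choose a nonzero cocycle supported exactly on $G_1$ and another supported exactly on $G_2$ (or on both); their supports differ, so the corresponding nontrivial central extensions are non-isomorphic, giving more than one class. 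This replaces your entire orbit-dimension analysis.
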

\begin{proof}
As ${\rm Gr}(C)$ has only one nonvanishing connected component, we know that
any two nonzero $2$-cocycles $\theta,\gamma\in Z_g^2(\frakp(C),V)$ must take nonzero values on it, and are already colinear by Lemma~\ref{lem:cocyle_colinear}.  So $\frakp(C)_\theta\cong \frakp(C)_\gamma$ by Theorem~\ref{th:isom}, and all nontrivial central extensions of $\frakp(C)$ give exactly one isomorphism class of DGCAs.

Conversely, suppose that all nontrivial central extensions of $\frakp(C)$ form one isomorphism class of DGCAs. Then for any two nonzero $2$-cocycles $\theta,\gamma\in Z_g^2(\frakp(C),V)$, we have $\frakp(C)_\theta\cong \frakp(C)_\gamma$, which is equivalent to the existence of $\sigma\in \Aut(\frakp(C))$ making $\theta_\sigma$ and $\gamma$ colinear by Theorem~\ref{th:isom}. In other words, there exist nonzero $2$-cocycles and all of them have the same distribution of nonzero entries, which forces that ${\rm Gr}(C)$ has exactly one nonvanishing connected component by Corollary~\ref{cor:cocyle_graph}.
\end{proof}

\section{Classification of diagonally graded commutative algebras of low dimension}\label{sec:low_dim}
In this section, we list the classification results of diagonally graded commutative algebras of low dimension as an illustration.

Let $\frakp(C)$ be an $n$-dimensional DGCA associated with the coefficient matrix $C$ in $\mathcal M_n$. The Classification Theorem~\ref{th:classification} says that the isomorphism classes of DGCAs of dimension $\leq 7$ are determined by the distribution of nonzero entries in coefficient matrices, so they can be classified by listing the $(0,1)$-coefficient matrices of the canonical representative system of isomorphism classes.
For the cases when $n=2,3,4$, we successively have
\begin{center}
\begin{table}[h]
\renewcommand{\arraystretch}{.92}
\centering
\resizebox{\linewidth}{!}{
\begin{tabular}{|cccccc|}
\hline
\multicolumn{2}{|c|}{$n=2$} & \multicolumn{4}{c|}{$n=3$} \\ \hline
\multicolumn{2}{|c|}{\vspace{-.9em}}& \multicolumn{4}{c|}{}\\
\multicolumn{1}{|c}{\hspace{1.1em} $\begin{pmatrix}
0 & 0 \\
0 & 0 \\
\end{pmatrix}$} & \multicolumn{1}{c|}{\hspace{-.9em} $\begin{pmatrix}
1 & 0 \\
0 & 0 \\
\end{pmatrix}$ } & \multicolumn{1}{c}{$\begin{pmatrix}
0 & 0 & 0 \\
0 & 0 & 0 \\
0 & 0 & 0 \\
\end{pmatrix}$} & \multicolumn{1}{c}{$\begin{pmatrix}
1 & 0 & 0 \\
0 & 0 & 0 \\
0 & 0 & 0 \\
\end{pmatrix}$} & \multicolumn{1}{c}{$\begin{pmatrix}
0 & 1 & 0 \\
1 & 0 & 0 \\
0 & 0 & 0 \\
\end{pmatrix}$} & $\begin{pmatrix}
1 & 1 & 0 \\
1 & 0 & 0 \\
0 & 0 & 0 \\
\end{pmatrix}$\\
\hline
\multicolumn{1}{|c|}{\vspace{-.9em}}& \multicolumn{5}{c|}{}\\
\multicolumn{1}{|c|}{\multirow{5}{*}{$n=4$}} & $\begin{pmatrix}
0&0&0&0\\
0&0&0&0\\
0&0&0&0\\
0&0&0&0\\
\end{pmatrix}$ & \multicolumn{1}{c}{$\begin{pmatrix}
1&0&0&0\\
0&0&0&0\\
0&0&0&0\\
0&0&0&0\\
\end{pmatrix}$} & \multicolumn{1}{c}{$\begin{pmatrix}
0&0&0&0\\
0&1&0&0\\
0&0&0&0\\
0&0&0&0\\
\end{pmatrix}$} & \multicolumn{1}{c}{$\begin{pmatrix}
0&1&0&0\\
1&0&0&0\\
0&0&0&0\\
0&0&0&0\\
\end{pmatrix}$} & $\begin{pmatrix}
0&0&1&0\\
0&0&0&0\\
1&0&0&0\\
0&0&0&0\\
\end{pmatrix}$\\ 
\multicolumn{1}{|c|}{\vspace{-.8em}}& \multicolumn{5}{c|}{}\\
\multicolumn{1}{|c|}{} & $\begin{pmatrix}
1&1&0&0\\
1&0&0&0\\
0&0&0&0\\
0&0&0&0\\
\end{pmatrix}$ & \multicolumn{1}{c}{$\begin{pmatrix}
1&0&1&0\\
0&0&0&0\\
1&0&0&0\\
0&0&0&0\\
\end{pmatrix}$} & \multicolumn{1}{c}{$\begin{pmatrix}
0&1&0&0\\
1&1&0&0\\
0&0&0&0\\
0&0&0&0\\
\end{pmatrix}$} & \multicolumn{1}{c}{$\begin{pmatrix}
0&0&1&0\\
0&1&0&0\\
1&0&0&0\\
0&0&0&0\\
\end{pmatrix}$} & $\begin{pmatrix}
1&1&1&0\\
1&1&0&0\\
1&0&0&0\\
0&0&0&0\\
\end{pmatrix}$\\[.2em]
 \hline
\end{tabular}}
\end{table}
\end{center}
\vspace{-.5cm}

For the case when $n=5$, we list the $(0,1)$-coefficient matrices $C$ of the canonical representatives of 22 isomorphism classes as follows, for each of which one concrete isomorphism $(b_{i})_{1\leq i\leq5}$ between $\calp(C)$ and $\calp(C')$ is also given whenever $C'\sim C$.
\newcounter{rowno}
\begin{center}
\renewcommand{\arraystretch}{1.7}
\begin{longtable*}{|m{1.5cm}<{\centering}|m{4cm}<{\centering}|m{8cm}<{\centering}|}
\hline
No. & $C$ & $(b_{i})_{1\leq i\leq5}$
\\ \endhead
\hline
\stepcounter{rowno}\therowno&
\renewcommand{\arraystretch}{.97}
$\left(\begin{array}{lllll}
0 & 0 & 0 & 0 & 0 \\
0 & 0 & 0 & 0 & 0 \\
0 & 0 & 0 & 0 & 0 \\
0 & 0 & 0 & 0 & 0 \\
0 & 0 & 0 & 0 & 0
\end{array}\right)$ & $b_{i}\in\mathbbm k^*$ for $1\leq i\leq 5$ \\
\hline
\stepcounter{rowno}\therowno&
\renewcommand{\arraystretch}{.97}
$\left(\begin{array}{lllll}
1 & 0 & 0 & 0 & 0 \\
0 & 0 & 0 & 0 & 0 \\
0 & 0 & 0 & 0 & 0 \\
0 & 0 & 0 & 0 & 0 \\
0 & 0 & 0 & 0 & 0
\end{array}\right)$ & \makecell{$b_{1}=1,\,b_{2}=c_{11}'$,\\[.7em]
$b_{3},\,b_{4},\,b_{5}\in\mathbbm k^*$} \\
\hline
\stepcounter{rowno}\therowno&
\renewcommand{\arraystretch}{.97}
$\left(\begin{array}{lllll}
0 & 0 & 0 & 0 & 0 \\
0 & 1 & 0 & 0 & 0 \\
0 & 0 & 0 & 0 & 0 \\
0 & 0 & 0 & 0 & 0 \\
0 & 0 & 0 & 0 & 0
\end{array}\right)$ & \makecell{$b_{2}=1,\,b_{4}=c_{22}'$,\\[.7em]
$b_{1},\,b_{3},\,b_{5}\in\mathbbm k^*$} \\
\hline
\stepcounter{rowno}\therowno&
\renewcommand{\arraystretch}{.97}
$\left(\begin{array}{lllll}
0 & 1 & 0 & 0 & 0 \\
1 & 0 & 0 & 0 & 0 \\
0 & 0 & 0 & 0 & 0 \\
0 & 0 & 0 & 0 & 0 \\
0 & 0 & 0 & 0 & 0
\end{array}\right)$ & \makecell{$b_{1}=b_{2}=1,\,b_{3}=c_{12}'$,\\[.7em]
$b_{4},\,b_{5}\in\mathbbm k^*$} \\
\hline
\stepcounter{rowno}\therowno&
\renewcommand{\arraystretch}{.97}
$\left(\begin{array}{lllll}
0 & 0 & 1 & 0 & 0 \\
0 & 0 & 0 & 0 & 0 \\
1 & 0 & 0 & 0 & 0 \\
0 & 0 & 0 & 0 & 0 \\
0 & 0 & 0 & 0 & 0
\end{array}\right)$ & \makecell{$b_{1}=b_{3}=1,\,b_{4}=c_{13}'$,\\[.7em]
$b_{2},\,b_{5}\in\mathbbm k^*$}\\
\hline
\stepcounter{rowno}\therowno&
\renewcommand{\arraystretch}{.97}
$\left(\begin{array}{lllll}
0 & 0 & 0 & 0 & 0 \\
0 & 0 & 1 & 0 & 0 \\
0 & 1 & 0 & 0 & 0 \\
0 & 0 & 0 & 0 & 0 \\
0 & 0 & 0 & 0 & 0
\end{array}\right)$ & \makecell{$b_2=b_3=1,\,b_5=c_{23}'$,\\[.7em]
$b_1,\,b_4\in\mathbbm k^*$} \\
\hline
\stepcounter{rowno}\therowno&
\renewcommand{\arraystretch}{.97}
$\left(\begin{array}{lllll}
0 & 0 & 0 & 1 & 0 \\
0 & 0 & 0 & 0 & 0 \\
0 & 0 & 0 & 0 & 0 \\
1 & 0 & 0 & 0 & 0 \\
0 & 0 & 0 & 0 & 0
\end{array}\right)$ & \makecell{$b_{1}=b_{4}=1,\,b_{5}=c_{14}'$,\\[.7em]
$b_{2},\,b_{3}\in\mathbbm k^*$} \\
\hline
\stepcounter{rowno}\therowno&
\renewcommand{\arraystretch}{.97}
$\left(\begin{array}{lllll}
1 & 1 & 0 & 0 & 0 \\
1 & 0 & 0 & 0 & 0 \\
0 & 0 & 0 & 0 & 0 \\
0 & 0 & 0 & 0 & 0 \\
0 & 0 & 0 & 0 & 0
\end{array}\right)$ & \makecell{$b_{1}=1,\,b_{2}=c_{11}',\,b_{3}=c_{11}'c_{12}'$,\\[.7em]
$b_{4},\,b_{5}\in\mathbbm k^*$} \\
\hline
\stepcounter{rowno}\therowno&
\renewcommand{\arraystretch}{.97}
$\left(\begin{array}{lllll}
1 & 0 & 1 & 0 & 0 \\
0 & 0 & 0 & 0 & 0 \\
1 & 0 & 0 & 0 & 0 \\
0 & 0 & 0 & 0 & 0 \\
0 & 0 & 0 & 0 & 0
\end{array}\right)$ & \makecell{$b_{1}=b_{3}=1,\,b_{2}=c_{11}',\,b_{4}=c_{13}'$,\\[.7em]
$b_{5}\in\mathbbm k^*$} \\
\hline
\stepcounter{rowno}\therowno&
\renewcommand{\arraystretch}{.97}
$\left(\begin{array}{lllll}
1 & 0 & 0 & 1 & 0 \\
0 & 0 & 0 & 0 & 0 \\
0 & 0 & 0 & 0 & 0 \\
1 & 0 & 0 & 0 & 0 \\
0 & 0 & 0 & 0 & 0
\end{array}\right)$ & \makecell{$b_{1}=b_{4}=1,\,b_{2}=c_{11}',\,b_{5}=c_{14}'$,\\[.7em]
$b_{3}\in\mathbbm k^*$} \\
\hline
\stepcounter{rowno}\therowno&
\renewcommand{\arraystretch}{.97}
$\left(\begin{array}{lllll}
0 & 1 & 0 & 0 & 0 \\
1 & 1 & 0 & 0 & 0 \\
0 & 0 & 0 & 0 & 0 \\
0 & 0 & 0 & 0 & 0 \\
0 & 0 & 0 & 0 & 0
\end{array}\right)$ & \makecell{$b_{1}=b_{2}=1,\,b_{3}=c_{12}',\,b_{4}=c_{22}'$,\\[.7em]
$b_{5}\in\mathbbm k^*$} \\
\hline
\stepcounter{rowno}\therowno&
\renewcommand{\arraystretch}{.97}
$\left(\begin{array}{lllll}
0 & 0 & 1 & 0 & 0 \\
0 & 1 & 0 & 0 & 0 \\
1 & 0 & 0 & 0 & 0 \\
0 & 0 & 0 & 0 & 0 \\
0 & 0 & 0 & 0 & 0
\end{array}\right)$ & \makecell{$b_{1}=b_{2}=1,\,b_{3}=\dfrac{c_{22}'}{c_{13}'},\,b_{4}=c_{22}'$,\\[1em]
$b_{5}\in\mathbbm k^*$} \\
\hline
\stepcounter{rowno}\therowno&
\renewcommand{\arraystretch}{.97}
$\left(\begin{array}{lllll}
0 & 0 & 0 & 0 & 0 \\
0 & 1 & 1 & 0 & 0 \\
0 & 1 & 0 & 0 & 0 \\
0 & 0 & 0 & 0 & 0 \\
0 & 0 & 0 & 0 & 0
\end{array}\right)$ & \makecell{$b_{2}=b_{3}=1,\,b_{4}=c_{22}',\,b_{5}=c_{23}'$,\\[.7em]
$b_{1}\in\mathbbm k^*$} \\
\hline
\stepcounter{rowno}\therowno&
\renewcommand{\arraystretch}{.97}
$\left(\begin{array}{lllll}
0 & 1 & 0 & 1 & 0 \\
1 & 0 & 0 & 0 & 0 \\
0 & 0 & 0 & 0 & 0 \\
1 & 0 & 0 & 0 & 0 \\
0 & 0 & 0 & 0 & 0
\end{array}\right)$ & $b_{1}=b_{2}=b_{4}=1,\,b_{3}=c_{12}',\,b_{5}=c_{14}'$ \\
\hline
\stepcounter{rowno}\therowno&
\renewcommand{\arraystretch}{.97}
$\left(\begin{array}{lllll}
0 & 0 & 1 & 0 & 0 \\
0 & 0 & 1 & 0 & 0 \\
1 & 1 & 0 & 0 & 0 \\
0 & 0 & 0 & 0 & 0 \\
0 & 0 & 0 & 0 & 0
\end{array}\right)$ & $b_{1}=b_{2}=b_{3}=1,\,b_{4}=a'_{13},\,b_{5}=c_{23}'$ \\
\hline
\stepcounter{rowno}\therowno&
\renewcommand{\arraystretch}{.97}
$\left(\begin{array}{lllll}
0 & 0 & 0 & 1 & 0 \\
0 & 0 & 1 & 0 & 0 \\
0 & 1 & 0 & 0 & 0 \\
1 & 0 & 0 & 0 & 0 \\
0 & 0 & 0 & 0 & 0
\end{array}\right)$ & $b_{1}=b_{2}=b_{5}=1,\,b_{3}=c_{23}'^{-1},\,b_{4}=c_{14}'^{-1}$ \\
\hline
\stepcounter{rowno}\therowno&
\renewcommand{\arraystretch}{.97}
$\left(\begin{array}{lllll}
1 & 1 & 0 & 1 & 0 \\
1 & 0 & 0 & 0 & 0 \\
0 & 0 & 0 & 0 & 0 \\
1 & 0 & 0 & 0 & 0 \\
0 & 0 & 0 & 0 & 0
\end{array}\right)$ & $b_{1}=b_{4}=1,\,b_{2}=c_{11}',\,b_{3}=c_{11}'c_{12}',\,b_{5}=c_{14}'$ \\
\hline
\stepcounter{rowno}\therowno&
\renewcommand{\arraystretch}{.97}
$\left(\begin{array}{lllll}
0 & 0 & 1 & 0 & 0 \\
0 & 1 & 1 & 0 & 0 \\
1 & 1 & 0 & 0 & 0 \\
0 & 0 & 0 & 0 & 0 \\
0 & 0 & 0 & 0 & 0
\end{array}\right)$ & \makecell{$b_{1}=b_{2}=1,\,b_{3}=\dfrac{c_{22}'}{c_{13}'},\,b_{4}=c_{22}'$,\\[1em] $b_{5}=\dfrac{c_{22}'c_{23}'}{c_{13}'}$} \\
\hline
\stepcounter{rowno}\therowno&
\renewcommand{\arraystretch}{.97}
$\left(\begin{array}{lllll}
1 & 1 & 1 & 0 & 0 \\
1 & 1 & 0 & 0 & 0 \\
1 & 0 & 0 & 0 & 0 \\
0 & 0 & 0 & 0 & 0 \\
0 & 0 & 0 & 0 & 0
\end{array}\right)$ & \makecell{$b_{1}=1,\,b_{2}=c_{11}',\,b_{3}=c_{11}'c_{12}'$,\\[.7em]
$b_{4}=c_{11}'^{2}c_{22}'=c_{11}'c_{12}'c_{13}'$,\\[.7em]
$b_{5}\in\mathbbm k^*$} \\
\hline
\stepcounter{rowno}\therowno&
\renewcommand{\arraystretch}{.97}
$\left(\begin{array}{lllll}
1 & 0 & 1 & 1 & 0 \\
0 & 0 & 1 & 0 & 0 \\
1 & 1 & 0 & 0 & 0 \\
1 & 0 & 0 & 0 & 0 \\
0 & 0 & 0 & 0 & 0
\end{array}\right)$ & \makecell{$b_{1}=b_{3}=1,\,b_{2}=c_{11}',\,b_{4}=c_{13}'$,\\[.8em]
$b_{5}=c_{11}'c_{23}'=c_{13}'c_{14}'$} \\
\hline
\stepcounter{rowno}\therowno&
\renewcommand{\arraystretch}{.97}
$\left(\begin{array}{lllll}
0 & 1 & 0 & 1 & 0 \\
1 & 1 & 1 & 0 & 0 \\
0 & 1 & 0 & 0 & 0 \\
1 & 0 & 0 & 0 & 0 \\
0 & 0 & 0 & 0 & 0
\end{array}\right)$ & \makecell{$b_{1}=b_{2}=1,\,b_{3}=c_{12}',\,b_{4}=c_{22}'$,\\[.8em]
$b_{5}=c_{12}'c_{23}'=c_{22}'c_{14}'$} \\
\hline
\stepcounter{rowno}\therowno&
\renewcommand{\arraystretch}{.97}
$\left(\begin{array}{lllll}
1 & 1 & 1 & 1 & 0 \\
1 & 1 & 1 & 0 & 0 \\
1 & 1 & 0 & 0 & 0 \\
1 & 0 & 0 & 0 & 0 \\
0 & 0 & 0 & 0 & 0
\end{array}\right)$ & \makecell{$b_{1}=1,\,b_{2}=c_{11}',\,b_{3}=c_{11}'c_{12}'$,\\[.7em]
$b_{4}=c_{11}'^{2}c_{22}'=c_{11}'c_{12}'c_{13}'$,\\[1em]
$b_{5}=c_{11}'^{2}c_{12}'c_{23}'=c_{11}'c_{12}'c_{13}'c_{14}'$}\\
\hline
\end{longtable*}
\end{center}

For the cases of higher dimension 6 and 7, it is similar to check but requiring much more effort, for their large numbers of isomorphism classes, and we will not present the whole classification result here. Alternatively, we choose to list two typical isomorphism classes of dimension 6 and 7 respectively by the same way as previous.
\begin{center}
\begin{tabular}{|m{1.5cm}<{\centering}|m{4.5cm}<{\centering}|m{8cm}<{\centering}|}
\hline
 & $C$ & $(b_{i})_{1\leq i\leq n}$ \\ \hline 
\makecell{$n=6$}
& \renewcommand{\arraystretch}{.97}
$\left(\begin{array}{llllll}
1 & 1 & 0 & 1 & 1 & 0 \\
1 & 0 & 0 & 1 & 0 & 0 \\
0 & 0 & 0 & 0 & 0 & 0 \\
1 & 1 & 0 & 0 & 0 & 0 \\
1 & 0 & 0 & 0 & 0 & 0 \\
0 & 0 & 0 & 0 & 0 & 0
\end{array}\right)$ & \makecell{$b_{1}=b_{4}=1,\,b_{2}=c_{11}',\,b_{3}=c_{11}'c_{12}',\,b_{5}=c_{14}'$,\\[.8em]
$b_{6}=c_{14}'c_{15}'=c_{11}'c_{24}'$} \\ \hline
%
\makecell{$n=7$} & \renewcommand{\arraystretch}{.97}
$\left(\begin{array}{lllllll}
1 & 0 & 1 & 0 & 1 & 1 & 0 \\
0 & 0 & 0 & 0 & 1 & 0 & 0 \\
1 & 0 & 0 & 0 & 0 & 0 & 0 \\
0 & 0 & 0 & 0 & 0 & 0 & 0 \\
1 & 1 & 0 & 0 & 0 & 0 & 0 \\
1 & 0 & 0 & 0 & 0 & 0 & 0 \\
0 & 0 & 0 & 0 & 0 & 0 & 0
\end{array}\right)$ & \makecell{$b_{1}=b_{3}=b_{5}=1,\,b_{2}=c_{11}',\,b_{4}=c_{13}',\,b_{6}=c_{15}'$,\\[.8em]
$b_{7}=c_{15}'c_{16}'=c_{11}'c_{25}'$} \\ \hline
\end{tabular}
\end{center}

\medskip
\begin{remark}
(1) The above classification result of DGCAs of low dimension is clearly available for arbitrary base field $\mathbbm k$. Particularly, the isomorphism classes of DGCAs over any prime field are already indivisible, and do not subdivide over arbitrary field extensions.

\smallskip
(2) The numbers of $(0,1)$-matrices in $\mathcal M_n\ (1\leq n\leq 10)$ are listed as follows.
\smallskip
\begin{center}
\begin{tabular}{|c|c|c|c|c|c|c|c|c|c|c|c|}
\hline
$n$ & 1 & 2 & 3 & 4 & 5 & 6 & 7 & 8 & 9 & 10 & $\cdots$\\
 \hline
$\#\ \mathcal M_n/\sim$ & 1 & 2 & 4 & 10 & 22 & 78 & 202 & 804 & 2824 & 14294 & $\cdots$\\
\hline
\end{tabular}
\end{center}
\smallskip
As we have seen, such an integer sequence
counts the number of DGCAs with $(0,1)$-coefficient matrices. It
has not yet been included in
OEIS (\url{https://oeis.org/}), and we wonder any other combinatorial interpretations.
\end{remark}

\bigskip
 \noindent
{\bf Conclusion. }
The classification method we adopt for DGCAs is a graded modification of the Skjelbred-Sund method, which highly depends on the diagonally graded assumption.
Furthermore, we expect that such an approach works for other diagonally graded algebraic structures, such as (pre-)Lie algebras~\cite{Ch,KCB}, Novikov algebras~\cite{BMN,BMB,Bd}, and even certain operated algebras like differential algebras, Rota-Baxter algebras~\cite{Guo,GK}, etc.

\bigskip
 \noindent
{\bf Acknowledgements. } We would like to thank Ming Ding and Yue Zhou for helpful discussions, and also Yuxing Shi for the calculation via GAP.
This work is supported by the National Natural Science Foundation of China (Grant No. 12071094, 12171155) and Guangdong Basic and Applied Basic Research Foundation (Grant No. 2022A1515010357).

\bibliographystyle{amsplain}

\end{document}